\newcommand{\stkout}[1]{\ifmmode\text{\sout{\ensuremath{#1}}}\else\sout{#1}\fi}
\numberwithin{equation}{section}
\newtheorem{prop}{Proposition}
\newtheorem{lemma}[prop]{Lemma}
\newtheorem{thm}[prop]{Theorem}
\newtheorem{cor}[prop]{Corollary}
\numberwithin{prop}{section}
\newtheorem{defn}[prop]{Definition}
\theoremstyle{definition}
\newtheorem{rmk}[prop]{Remark}
\definecolor{c1}{rgb}{0.2,0.4,0.5}
\definecolor{c2}{rgb}{0.1,0.3,0.5}
\definecolor{c3}{rgb}{0.2,0.7,0.5}
\def \k {K\"ahler }
\newcommand{\oo}[1]{\overline{#1}}
\newcommand{\bC}{\mathbb{C}}
\newcommand{\bR}{\mathbb{R}}
\newcommand{\bB}{\mathbb{B}}
\newcommand{\bK}{\mathbb{K}}
\newcommand{\eps}{\varepsilon}
\newcommand{\id}{\mathrm{id}}
\DeclareMathOperator{\Aut}{Aut}
\DeclareMathOperator{\Reg}{Reg}
\begin{document}

\title[]{Algebraicity of the Bergman Kernel}

\begin{abstract}
Our main result introduces a new way to characterize two-dimensional finite ball quotients by algebraicity of their Bergman kernels. This characterization is particular to dimension two and fails in higher dimensions, as is illustrated by a counterexample in dimension three constructed in this paper. As a corollary of our main theorem, we prove, e.g., that a smoothly bounded strictly pseudoconvex domain $G$ in $\mathbb{C}^2$ has rational Bergman kernel if and only if there is a rational biholomorphism from $G$ to $\mathbb{B}^2$.
\end{abstract}

\subjclass[2010]{32A36, 32C20, 32S99}


\author [Ebenfelt]{Peter Ebenfelt}
\address{Department of Mathematics, University of California at San Diego, La Jolla, CA 92093, USA} \email{{pebenfelt@ucsd.edu}}

\author[Xiao]{Ming Xiao}
\address{Department of Mathematics, University of California at San Diego, La Jolla, CA 92093, USA}
\email{{m3xiao@ucsd.edu}}

\author [Xu]{Hang Xu}
\address{Department of Mathematics, University of California at San Diego, La Jolla, CA 92093, USA}
\email{{h9xu@ucsd.edu}}

\thanks{The first and second authors were supported in part by the NSF grants DMS-1900955 and DMS-1800549, respectively.}

\maketitle

\section{Introduction}

The Bergman kernel, introduced by S. Bergman in \cite{Bergman1933, Bergman1935} for domains in $\bC^n$ and later cast in differential geometric terms by S. Kobayashi \cite{Ko}, plays a fundamental role in several complex variables and complex geometry. Its biholomorphic invariance properties and intimate connection with the CR geometry of the boundary make it an important tool in the study of open complex manifolds. The use of the Bergman kernel, e.g., in the study of biholomorphic mappings and the geometry of bounded strictly pseudoconvex domains in $\bC^n$ was pioneered by C. Fefferman \cite{Fe,Fe2, Fe3}, who developed a theory of Bergman kernels in such domains and initiated a now famous program to describe the boundary singularity in terms of the local invariant CR geometry; see also \cite{BaileyEastwoodGraham1994}, \cite{Hirachi2000} for further progress on Fefferman's program.

A broad and general problem of foundational importance is that of classifying complex manifolds, or more generally analytic spaces, in terms of their Bergman kernels or Bergman metrics.  For example, a well-known result of Q. Lu \cite{Lu} implies that if a relatively compact domain in an $n$-dimensional K\"ahler manifold has a complete Bergman metric with constant holomorphic sectional curvature, then the domain is biholomorphic to the unit ball $\bB^n$ in $\bC^n$. Another example is the conjecture of S.-Y. Cheng \cite{CH}, which states that the Bergman metric of a smoothly bounded strongly pseudoconvex domain in $\bC^n$  is K\"ahler--Einstein (i.e., has Ricci curvature equal to a constant multiple of the metric tensor) if and only if it is biholomorphic to the unit ball $\bB^n$. This conjecture was confirmed by Fu-Wong \cite{FuWo} and Nemirovski--Shafikov \cite{NeSh} in the two dimensional case, and in the higher dimensional case by X. Huang and the second author \cite{HX}.

In this paper,
we introduce a new characterization of the two-dimensional unit ball $\bB^2\subset \bC^2$ and, more generally, two-dimensional finite ball quotients $\mathbb{B}^2/\Gamma$ in terms of algebraicity of the Bergman kernel. It is interesting, and perhaps surprising then, to note that such a characterization fails in the higher dimensional case. Indeed, in Section \ref{Sec counterexample} below we construct a relatively compact domain $G$ with smooth strongly pseudoconvex boundary in a three-dimensional algebraic variety $V\subset \mathbb{C}^4$, with an isolated normal singularity in the interior of $G$, such that the boundary $\partial G$ is not spherical and, furthermore, $G$ is not biholomorphic to any finite ball quotient; recall that a CR hypersurface $M$ of dimension $2n-1$ is said to be {\em spherical} if near each point $p\in M$, it is locally CR diffeomorphic to an open piece of the unit sphere $S^{2n-1}\subset \bC^n$. Nevertheless, in two dimensions it turns out that algebraicity of the Bergman kernel does characterize finite ball quotients:

\begin{thm}\label{main theorem intro}
	Let $V$ be a $2$-dimensional algebraic variety in $\mathbb{C}^N$, and $G$ a relatively compact domain in $V$. Assume that every point in $\overline{G}$ is a smooth point of $V$ except for finitely many isolated normal singularities inside $G$, and that $G$ has a smooth strongly pseudoconvex boundary. Then the Bergman kernel form of $G$ is algebraic if and only if there is an algebraic branched covering map $F$ from $\mathbb{B}^2$ onto $G$, which realizes $G$ as a ball quotient $\mathbb{B}^2/\Gamma$ where $\Gamma$ is a finite unitary group with no fixed points on $\partial \mathbb{B}^2$.
\end{thm}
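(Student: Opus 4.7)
For the easy direction: assume $F:\mathbb{B}^2\to G$ is an algebraic branched covering realizing $G = \mathbb{B}^2/\Gamma$ with $\Gamma\subset U(2)$ finite. The transformation law for Bergman kernel forms under proper holomorphic maps with finite fibers expresses $F^{*}K_G$ on the regular locus as a $\Gamma$-average of $K_{\mathbb{B}^2}$. Since $K_{\mathbb{B}^2}$ is rational, $\Gamma$ is finite, and $F$ is algebraic and proper, this algebraic relation descends to give that $K_G$ is algebraic on $G$.

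For the converse I would first show that algebraicity of $K_G$ forces $\partial G$ to be spherical. The blow-up locus of the algebraic function $1/K_G$ coincides with $\partial G$, so $\partial G$ is itself a real-algebraic strongly pseudoconvex hypersurface of $V$. Feeding this into Fefferman's asymptotic expansion
\begin{equation*}
K_G(z,\bar z)\;=\;\frac{\phi(z)}{\rho(z)^{3}}+\psi(z)\,\log \rho(z),\qquad \phi,\psi\in C^{\infty}(\overline{G}),
\end{equation*}
valid for any smooth defining function $\rho$ of $\partial G$, one argues that algebraicity of $K_G$ prohibits a genuine logarithmic singularity and forces $\psi$, together with all of its transversal jets along $\partial G$, to vanish. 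In complex dimension two these boundary jets encode the full Moser umbilical tensor of the CR structure of $\partial G$ (via the identifications of Graham and Hirachi), so their vanishing forces $\partial G$ to be spherical. This is where the dimension-two hypothesis is essential: in higher dimensions the boundary invariants recoverable from $K_G$ alone are insufficient to detect all non-spherical CR structures, which is precisely why the counterexample of Section \ref{Sec counterexample} is possible. This step is also the main technical obstacle: passing from scalar algebraicity of $K_G$ to vanishing of all higher Moser jets requires exploiting finer consequences of algebraicity — morally, meromorphic extendability of $K_G(z,\bar w)$ across $\partial G$ through the Segre varieties — and not merely the classical vanishing of the leading log-term coefficient.

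Once $\partial G$ is known to be spherical, I would construct the covering $F:\mathbb{B}^2\to G$ as follows. A local CR developing map embeds a one-sided collar of $\partial G$ into $\mathbb{B}^2$, and analytic continuation into the interior of $G$ yields a multivalued holomorphic map to $\mathbb{B}^2$ whose only possible branching is at the finitely many isolated normal singularities of $V$ in $G$; each local branch extends holomorphically across those singularities by boundedness and normality. The monodromy group $\Gamma\subset\mathrm{Aut}(\mathbb{B}^2)=PU(2,1)$ acts freely on $\partial\mathbb{B}^2$ because $\partial G$ is smooth and compact, which excludes parabolic and loxodromic elements and forces $\Gamma$ to be finite; since any finite subgroup of $PU(2,1)$ fixes an interior point of $\mathbb{B}^2$, conjugating that point to the origin places $\Gamma$ inside $U(2)$. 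Inverting yields the desired branched covering $F$, and algebraicity of $K_G$ together with real-algebraicity of $\partial G$ and $\partial\mathbb{B}^2$ promotes the graph of $F$, a priori only a closed analytic subset of $\mathbb{B}^2\times V$, to an algebraic subvariety via a reflection-principle and rational-dependence argument.
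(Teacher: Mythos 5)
Your ``if'' direction is essentially the paper's: pull back by $F$, use the transformation law for finite (branched) coverings, and conclude algebraicity of $K_G$ from rationality of $K_{\mathbb{B}^2}$ and of the elements of $\Gamma$; that part is fine. The genuine gap is in your sphericity step. You correctly identify that one must show the log-term coefficient $\psi$ in Fefferman's expansion vanishes to infinite order along $\partial G$, but you then declare this ``the main technical obstacle,'' assert that the classical log-coefficient analysis does not suffice, and gesture at meromorphic continuation of $K_G(z,\bar w)$ along Segre varieties without carrying anything out. In fact no such finer machinery is needed, and your proposed detour through ``all higher Moser jets'' is never executed. The paper's mechanism is elementary: algebraicity gives a polynomial relation $a_qK^q+\cdots+a_0=0$; first, dividing by $K^q$ and using $K\to\infty$ at the strictly pseudoconvex boundary shows $a_q=0$ on $\partial G$, and Malgrange's theorem (not a ``blow-up locus'' assertion) makes $\partial G$ real-analytic, hence Nash algebraic; second, substituting Fefferman's expansion into the polynomial relation, multiplying by $r^{3q}$, and invoking the Fu--Wong lemma on linear combinations of powers of $\log r$ with smooth coefficients forces every coefficient, in particular $a_q\psi^q r^{3q}$, to vanish to infinite order, whence $\psi$ does. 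Once that is done, the two-dimensional Ramadanov conjecture (Burns--Graham, Boutet de Monvel) gives sphericity outright; your worry that infinite-order vanishing of the log coefficient is insufficient in dimension two is unfounded, and this is precisely the only place the hypothesis $\dim V=2$ enters.

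Your construction of the covering also replaces the paper's citations with an unproved sketch. Continuing a CR developing map from a boundary collar through all of $G$ minus the singular set, controlling its monodromy, showing the resulting multivalued map inverts to a proper surjective branched covering $\mathbb{B}^2\to G$ whose fibers are exactly $\Gamma$-orbits, and upgrading it from holomorphic to algebraic are each nontrivial; as written they amount to re-deriving the Huang--Ji and Huang theorems without proof. The paper instead applies Huang's result that a compact, Nash-algebraic, spherical strongly pseudoconvex hypersurface is CR equivalent to $\partial\mathbb{B}^2/\Gamma$ with $\Gamma\subset U(2)$ fixed-point free on the sphere, extends the CR covering $\partial\mathbb{B}^2\to\partial G$ holomorphically inward by Hartogs, gets algebraicity of $F$ from Huang's algebraicity theorem, and then proves surjectivity, the identification of fibers with $\Gamma$-orbits, and the localization of all branching and all singularities at the single point $F(0)$ by separate arguments in which normality of the isolated singularities is used in an essential way (to extend the local inverse across a putative extra singular point and rule it out). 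Your proposal mentions normality only in passing and never establishes the fiber structure or properness, so the final statement that $F$ realizes $G$ as $\mathbb{B}^2/\Gamma$ is not actually reached.
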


\begin{rmk}\label{rmk counterexample intro} We note that in addition to showing that Theorem \ref{main theorem intro} fails in dimension $\geq 3$, our example in Section \ref{Sec counterexample} also shows that the Ramanadov Conjecture for the Bergman kernel fails for higher dimensional normal Stein spaces. Recall that the Ramadanov Conjecture (c.f., \cite{Ramadanov1981}, \cite[Question 3]{EnZh}) proposes that if the logarithmic term in Fefferman's asymptotic expansion \cite{Fe2} of the Bergman kernel vanishes to infinite order at the boundary of a normal reduced Stein space with compact, smooth strongly pseudoconvex boundary, then the boundary is spherical. The Ramadanov Conjecture has been established in two dimensions by the work of D. Burns and R. C. Graham (see \cite{Graham1987b}). The normal reduced Stein space constructed in Section \ref{Sec counterexample} gives a $3$-dimensional counterexample with one isolated singularity. The counterexamples in \cite{EnZh} are smooth, but not Stein.
\end{rmk}

Theorem \ref{main theorem intro} has two immediate consequences in the non-singular case:
\begin{cor}\label{main corollary intro}
	Let $V$ be a $2$-dimensional algebraic variety in $\mathbb{C}^N$, and let $G$ be a relatively compact domain in $V$ with smooth strongly pseudoconvex boundary. Assume that every point in $\overline{G}$ is a smooth point of $V$. Then the Bergman kernel form of $G$ is algebraic if and only if $G$ is biholomorphic to $\mathbb{B}^2$ by an algebraic map.
\end{cor}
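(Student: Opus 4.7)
The strategy is to deduce the corollary directly from Theorem \ref{main theorem intro}; the only additional observation needed is that the smoothness hypothesis on $\overline{G}$ forces the deck group produced by Theorem \ref{main theorem intro} to be trivial.

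For the nontrivial direction, suppose the Bergman kernel form of $G$ is algebraic. Since the hypothesis of Corollary \ref{main corollary intro} is exactly that of Theorem \ref{main theorem intro} with zero interior singularities, Theorem \ref{main theorem intro} yields an algebraic branched covering map $F : \mathbb{B}^2 \to G$ realizing $G$ as a ball quotient $\mathbb{B}^2/\Gamma$, where $\Gamma$ is a finite subgroup of $U(2)$ acting without fixed points on $\partial \mathbb{B}^2$. Every element of $\Gamma$ is a unitary linear transformation of $\mathbb{C}^2$, so each nontrivial $\gamma \in \Gamma$ fixes the origin $0 \in \mathbb{B}^2$. If $\Gamma$ were nontrivial, the image $F(0)$ would be a quotient singularity of $G = \mathbb{B}^2/\Gamma$, contradicting the assumption that every point of $\overline{G}$ is a smooth point of $V$. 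Hence $\Gamma = \{\id\}$, the map $F$ is unbranched, and by simple connectivity of $\mathbb{B}^2$ together with connectedness of $G$, $F$ is an algebraic biholomorphism onto $G$.

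The converse is essentially formal. If $\Phi : \mathbb{B}^2 \to G$ is an algebraic biholomorphism between two-dimensional algebraic varieties, then $\Phi$ is birational and its inverse is also algebraic. The Bergman kernel form of $\mathbb{B}^2$ is rational, and by the standard biholomorphic transformation law the Bergman kernel form of $G$ is the pushforward of the Bergman kernel form of $\mathbb{B}^2$ under $\Phi$; since all ingredients are algebraic, so is the resulting form on $G$.

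The main obstacle is entirely absorbed into Theorem \ref{main theorem intro}; once that theorem is granted, the corollary is immediate. The only point requiring explicit verification is the standard fact that any nontrivial finite subgroup of $U(2)$ produces a singularity in the quotient at the image of the origin, which is what converts the finite-ball-quotient conclusion of Theorem \ref{main theorem intro} into a genuine biholomorphism with $\mathbb{B}^2$ under the smoothness hypothesis.
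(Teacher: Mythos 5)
Your proposal is correct and follows essentially the same route as the paper's proof: apply Theorem \ref{main theorem intro} to get the algebraic realization of $G$ as $\mathbb{B}^2/\Gamma$, note that a nontrivial $\Gamma$ would force $G$ to have a singular point (contradicting the smoothness hypothesis), and obtain the converse from the ``if'' direction of the theorem (the transformation law) with $\Gamma=\{\id\}$. One small caveat: your closing remark that \emph{any} nontrivial finite subgroup of $U(2)$ produces a singular quotient is not literally true (groups generated by reflections give smooth quotients); the fact you actually need, and which holds here, is that a group with no fixed points on $\partial\mathbb{B}^2$ contains no reflections, so by the criterion of Cartan/Rudin (as cited in the paper) the quotient is singular at the image of the origin whenever $\Gamma$ is nontrivial.
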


\begin{cor}\label{main corollary 2 intro}
	Let $G$ be a bounded domain in $\mathbb{C}^2$ with smooth strongly pseudoconvex boundary. Then the Bergman kernel of $G$ is rational (respectively, algebraic) if and only if there is a rational (respectively, algebraic) biholomorphic map from $G$ to $\mathbb{B}^2$.
\end{cor}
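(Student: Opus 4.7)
The corollary follows from Corollary \ref{main corollary intro} together with a rationality upgrade on the map side. For the ``if'' direction of both statements, apply the Bergman transformation law: if $F\colon G\to \mathbb{B}^2$ is a biholomorphism, then
\[
K_G(z,w) \;=\; \frac{(2/\pi^2)\,\det F'(z)\,\overline{\det F'(w)}}{\bigl(1 - \langle F(z), F(w)\rangle\bigr)^3},
\]
and when $F$ is rational (respectively, algebraic), so is $K_G$ as a function of $(z,\bar w)$. For the ``only if'' direction in the algebraic case, we apply Corollary \ref{main corollary intro} with $V = \mathbb{C}^2$: no singularities arise, and algebraicity of $K_G$ yields an algebraic biholomorphism $F\colon G\to \mathbb{B}^2$.

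The main content is the ``only if'' direction in the rational case, where we must promote the algebraic biholomorphism produced by Corollary \ref{main corollary intro} to a rational one. Fix $w_0 \in G$. Since every element of $\Aut(\mathbb{B}^2)$ is a rational (indeed M\"obius) map, replacing $F$ by $\Phi\circ F$ for a suitable $\Phi\in\Aut(\mathbb{B}^2)$ preserves both rationality and algebraicity, so we may arrange $F(w_0) = 0$. Then $K_G(z, w_0) = (2/\pi^2)\,\det F'(z)\,\overline{\det F'(w_0)}$ is a non-vanishing holomorphic function of $z$. Differentiating the transformation formula in $\bar w_j$, evaluating at $w = w_0$, and dividing by $K_G(z, w_0)$ yields
\[
\frac{\partial_{\bar w_j} K_G(z, w)|_{w = w_0}}{K_G(z, w_0)} \;=\; 3 \sum_{k=1}^{2} \overline{\partial_{w_j} F_k(w_0)}\, F_k(z) + c_j,
\]
where $c_j \in \mathbb{C}$ depends only on $F$ and $w_0$. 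Since $F'(w_0)$ is invertible, inverting this $2\times 2$ linear system recovers each $F_k(z)$ as a $\mathbb{C}$-linear combination of the left-hand sides plus a constant. If $K_G$ is rational in $(z,\bar w)$, then $\partial_{\bar w_j} K_G / K_G$ is rational, and its restriction to $\bar w = \bar w_0$ is a rational function of $z$; hence each $F_k$ is rational.

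The heavy lifting is done by Corollary \ref{main corollary intro}, and the rational upgrade amounts to an elementary linear-algebra extraction from the Bergman transformation law. The only genuine subtlety is that the normalization $F(w_0) = 0$ must be achieved without destroying the rationality hypothesis, which is handled by the rationality of $\Aut(\mathbb{B}^2)$.
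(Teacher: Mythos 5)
Your proposal is correct and follows essentially the same route as the paper: the algebraic case is delegated to Corollary \ref{main corollary intro}, and the rational upgrade is obtained by normalizing $f(w_0)=0$ via an automorphism of $\mathbb{B}^2$ and then solving the invertible $2\times 2$ linear system that expresses $f_1,f_2$ in terms of $\partial_{\bar w_j}K_G(\cdot,\bar w_0)/K_G(\cdot,\bar w_0)$, exactly as in the paper's logarithmic-derivative computation (the paper differentiates $\log K_G(z,\bar z)$ and complexifies, while you differentiate the off-diagonal formula directly, which is the same calculation).
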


We remark that although Theorem \ref{main theorem intro} fails in higher dimension, Corollary \ref{main corollary intro} and \ref{main corollary 2 intro} might still be true. For instance, it is clear from the proof below of Theorem \ref{main theorem intro} (see Remark \ref{rmk:Ramadanov}) that if the Ramadanov Conjecture is proved to hold for, e.g., strongly pseudoconvex bounded domains in $\mathbb{C}^n$, which is still a possibility despite Remark \ref{rmk counterexample intro} above, then Corollary \ref{main corollary 2 intro} also holds in $\bC^n$.

We also remark that the rationality of the biholomorphic map $G\to \bB^2$ in Corollary \ref{main corollary 2 intro}, once its existence has been established, follows from the work of S. Bell \cite{Bell}. For the reader's convenience, a self-contained proof of the rationality is given in Section \ref{Sec Algebraic BK}.

As a final remark in this introduction, we note that, by Lempert's algebraic approximation theorem \cite{Le}, if $G$ is a relatively compact domain in a reduced Stein space $X$ with only isolated singularities, then there exist an affine algebraic variety $V$, a domain $\Omega \subset V$, and a biholomorphism $F$ from a neighborhood of $\oo{G}$ to a neighborhood of $\oo{\Omega}$ with $F(\Omega)=G$. We shall say such a domain $\Omega$ is an \emph{algebraic realization} of $G$. Theorem \ref{main theorem intro} implies the following corollary.

\begin{cor} Let $G$ be a relatively compact domain in a $2$-dimensional reduced Stein space $X$ with smooth strongly pseudoconvex boundary and only isolated normal singularities. If $G$ has an algebraic realization with an algebraic Bergman kernel, then $G$ is biholomorphic to a ball quotient $\mathbb{B}^2/\Gamma$, where $\Gamma$ is a finite unitary group with no fixed point on $\partial \mathbb{B}^2$.

\end{cor}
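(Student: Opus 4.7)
The plan is to reduce directly to Theorem \ref{main theorem intro} via the algebraic realization. By hypothesis, $G$ admits an algebraic realization in the sense introduced in the paragraph before the corollary, i.e.\ there exists an affine algebraic variety $V$, a domain $\Omega \subset V$, and a biholomorphism $F$ from a neighborhood of $\overline{G}$ onto a neighborhood of $\overline{\Omega}$ with $F(\Omega) = G$, such that the Bergman kernel form of $\Omega$ is algebraic. The first step is to verify that $\Omega$ satisfies the hypotheses of Theorem \ref{main theorem intro}. Since $F$ is a biholomorphism on a neighborhood of $\overline{G}$, it carries smooth points to smooth points and isolated normal singularities to isolated normal singularities; hence every point of $\overline{\Omega}$ is smooth in $V$ except for finitely many isolated normal singularities inside $\Omega$, all inherited from those of $G$. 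Similarly, $F$ transports the smooth strongly pseudoconvex boundary of $G$ to a smooth strongly pseudoconvex boundary of $\Omega$, and $\Omega$ is relatively compact in $V$ because $F(\overline{G}) = \overline{\Omega}$ is compact.

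Once the hypotheses are in place, the second step is to apply Theorem \ref{main theorem intro} to $\Omega$: since its Bergman kernel form is algebraic, there exists an algebraic branched covering map $F_0 \colon \mathbb{B}^2 \to \Omega$ realizing $\Omega$ as $\mathbb{B}^2/\Gamma$ for a finite unitary group $\Gamma$ with no fixed points on $\partial \mathbb{B}^2$. Composing with the biholomorphism $F^{-1}\colon \Omega \to G$ yields a biholomorphism $G \cong \mathbb{B}^2/\Gamma$, which is the desired conclusion. There is no genuine obstacle here: the corollary is essentially a translation of Theorem \ref{main theorem intro} from the setting of algebraic varieties in $\mathbb{C}^N$ to the abstract Stein setting, made possible by Lempert's algebraic approximation theorem packaged in the notion of algebraic realization; the only verification is that the hypotheses on smoothness, normality of singularities, and pseudoconvexity of the boundary transfer along the biholomorphism $F$, which follows immediately from $F$ being defined on a neighborhood of $\overline{G}$.
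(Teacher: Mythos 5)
Your reduction is correct and is exactly what the paper intends: the corollary is stated as an immediate consequence of Theorem \ref{main theorem intro} applied to the algebraic realization $\Omega$, with the hypotheses (relative compactness, smooth strongly pseudoconvex boundary, finitely many isolated normal singularities) transported along the biholomorphism of neighborhoods of the closures, and the conclusion pulled back via $F^{-1}$. No gaps; this matches the paper's (implicit) argument.
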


To prove the "only if" implication in Theorem \ref{main theorem intro}, we use the asymptotic boundary behavior of the Bergman kernel to establish algebraicity and sphericity of the boundary of $G$.  Fefferman's asymptotic expansion \cite{Fe2} and the Riemann mapping type theorems due to X. Huang--S. Ji (\cite{HuJi98}) and X. Huang (\cite{Hu}) play important roles in the proof. To prove the converse ("if") implication in the theorem, we will need to compute the Bergman kernel forms of finite ball quotients. In order to do so, we shall establish a transformation formula for (possibly branched) covering maps of complex analytic spaces. This formula generalizes a classical theorem of Bell (\cite{Bell81},  \cite{Bell82}):


\begin{thm}\label{BK transformation thm}
Let $M_1$  and $M_2$ be two complex analytic sets.
Let $V_1\subset M_1$ and $V_2 \subset M_2$ be proper analytic subvarieties such that $M_1-V_1, M_2-V_2$ are complex manifolds of the same dimension.
Assume that $f:M_1-V_1\rightarrow M_2-V_2$ is a finite ($m-$sheeted) holomorphic covering map. Let $\Gamma$ be the deck transformation group for the covering map (with $|\Gamma|=m$), and denote by $K_i(z,\bar{w})$ the Bergman kernels of $M_i$ for $i=1,2$. Then the Bergman kernel forms transform according to
	\begin{equation}\label{BK transformation eq}
	\sum_{\gamma\in \Gamma}(\gamma, \id)^*K_1=\sum_{\gamma\in \Gamma}(\id, \gamma)^*K_1=(f,f)^*K_2 \quad~\text{on}~(M_1-V_1) \times (M_1-V_1),
	\end{equation}
	where $\id: M_1\rightarrow M_1$ is the identity map.
\end{thm}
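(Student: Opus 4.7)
The plan is to realize both sides of \eqref{BK transformation eq} as reproducing kernels of the same Hilbert space of holomorphic top-forms and compare them. Throughout, let $n=\dim M_1 = \dim M_2$ and denote by $\mathcal{H}(M_i)$ the Hilbert space of square-integrable holomorphic $n$-forms on $M_i$ (defined on the smooth locus); since $V_i$ is a proper analytic subvariety, and hence of measure zero, one has $\mathcal{H}(M_i)=\mathcal{H}(M_i\setminus V_i)$ as Hilbert spaces. The Bergman kernel form $K_i$ is the reproducing kernel of $\mathcal{H}(M_i)$, viewed as an element of $\mathcal{H}(M_i)\,\widehat{\otimes}\,\overline{\mathcal{H}(M_i)}$.

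First I would verify two basic functorial properties of the pullback $f^{*}:\mathcal{H}(M_2)\rightarrow \mathcal{H}(M_1)$. Since $f$ is an $m$-sheeted local biholomorphism away from the subvarieties, a change-of-variables (partitioning $M_1\setminus V_1$ by sheets over small evenly-covered neighborhoods) gives $\|f^{*}\alpha\|^2 = m\,\|\alpha\|^2$ for all $\alpha\in \mathcal{H}(M_2)$; in particular $f^{*}$ is bounded and injective. Second, each $\gamma\in\Gamma$ is a biholomorphism of $M_1\setminus V_1$, so $\gamma^{*}$ is an isometry of $\mathcal{H}(M_1)$, and the image of $f^{*}$ lies in the $\Gamma$-invariant subspace $\mathcal{H}(M_1)^{\Gamma}$. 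The reverse inclusion is the crucial point: any $\beta\in\mathcal{H}(M_1)^{\Gamma}$ descends to a holomorphic $n$-form on $M_2\setminus V_2$ (locally, pick any sheet of $f$ and push forward; $\Gamma$-invariance makes the choice irrelevant), and the $L^2$-norm identity shows the descended form lies in $\mathcal{H}(M_2)$. Consequently $\frac{1}{\sqrt m}\,f^{*}$ is a unitary isomorphism $\mathcal{H}(M_2)\xrightarrow{\sim}\mathcal{H}(M_1)^{\Gamma}$.

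Next, pick any orthonormal basis $\{\phi_j\}$ of $\mathcal{H}(M_2)$; then $\{\tfrac{1}{\sqrt m}f^{*}\phi_j\}$ is an orthonormal basis of $\mathcal{H}(M_1)^{\Gamma}$. Writing $K_2=\sum_j \phi_j\otimes\overline{\phi_j}$ and pulling back via $(f,f)$ gives
\[
(f,f)^{*}K_2 = \sum_{j}(f^{*}\phi_j)\otimes\overline{(f^{*}\phi_j)} = m\,K_1^{\Gamma},
\]
where $K_1^{\Gamma}$ denotes the reproducing kernel of $\mathcal{H}(M_1)^{\Gamma}$. On the other hand, the orthogonal projection $P:\mathcal{H}(M_1)\rightarrow\mathcal{H}(M_1)^{\Gamma}$ is the group averaging $P=\frac{1}{m}\sum_{\gamma}\gamma^{*}$ (a standard consequence of the $\gamma^{*}$ being isometries). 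Applying $P$ in the first argument of $K_1$ (viewed as an operator kernel on $\mathcal{H}(M_1)$) yields the kernel of $P\circ\mathrm{id}=P$, which equals $K_1^{\Gamma}$; this gives $K_1^{\Gamma}=\frac{1}{m}\sum_{\gamma}(\gamma,\mathrm{id})^{*}K_1$. Since $P$ is self-adjoint, applying $P$ in the second argument gives the same kernel $K_1^{\Gamma}=\frac{1}{m}\sum_{\gamma}(\mathrm{id},\gamma)^{*}K_1$. Combining with the identity $(f,f)^{*}K_2=m\,K_1^{\Gamma}$ proves \eqref{BK transformation eq}.

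The main technical obstacle is step two, namely rigorously establishing the unitary identification $\mathcal{H}(M_2)\cong \mathcal{H}(M_1)^{\Gamma}$: one must justify that $L^2$ holomorphic forms on $M_i\setminus V_i$ really coincide with those on $M_i$ (which is where the hypothesis that the singularities/branch loci are proper analytic subvarieties is used, via $L^2$-removability of such sets for holomorphic $n$-forms), and one must argue that a $\Gamma$-invariant form on the cover descends to a genuinely well-defined holomorphic form across the branch locus rather than merely on the unramified part. All other steps are formal Hilbert-space manipulations once this identification is in place.
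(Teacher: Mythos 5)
Your argument is correct, but it is organized differently from the paper's. The paper follows Bell's method: it first proves a transformation law for the Bergman \emph{projections}, $P_1(f^*\phi)=f^*(P_2\phi)$, using two lemmas (the sheet-counting norm identity $\|f^*\phi\|^2=m\|\phi\|^2$ and the adjoint identity $(v,f^*\phi)=\bigl(\sum_k F_k^*v,\phi\bigr)$ for the trace operator $\sum_k F_k^*$), and then deduces \eqref{BK transformation eq} by writing both projections as integral operators, comparing their kernels, and pulling back by $(\id,f)$; finally it converts $(\id,\gamma_k)^*K_1$ into $(\gamma_k,\id)^*K_1$ using invariance of $K_1$ under the biholomorphisms $\gamma_k$. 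You instead upgrade the same two analytic ingredients into a structural statement --- $\tfrac{1}{\sqrt m}f^*$ is a unitary isomorphism of $\mathcal{H}(M_2)$ onto the invariant subspace $\mathcal{H}(M_1)^\Gamma$ (your descent of a $\Gamma$-invariant form is exactly the paper's trace $\tfrac1m\sum_k F_k^*$, and regularity of the covering, i.e.\ $|\Gamma|=m$, is what makes the descent well defined) --- and then read off the identity from orthonormal bases together with the averaging projection $P=\tfrac1m\sum_\gamma\gamma^*$ and the standard fact that applying an orthogonal projection in one slot of a reproducing kernel produces the kernel of its range. Your route is more direct for the kernel formula and makes the role of the invariant subspace transparent, at the cost of not producing the projection transformation law (the paper's Proposition 3.1), which is of independent interest. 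One small remark: the ``technical obstacle'' you flag at the end about descending a form \emph{across the branch locus} is not actually needed --- the Bergman space of $M_2$ coincides with that of $M_2-V_2$ by the same $L^2$-removability (Kobayashi) fact you already invoke, so it suffices to descend on the unramified part; with that observation your proof is complete.
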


See Section 2 for the notation used in the formula in Theorem \ref{BK transformation thm}. We expect that this formula will be useful in other applications as well. In an upcoming paper \cite{EbenfeltXiaoXu2020}, the authors apply it to study the question of when the Bergman metric of a finite ball quotient is K\"ahler--Einstein. (This is always the case for finite disk quotients, i.e., one-dimensional ball quotients, by recent work of X. Huang and X. Li \cite{HuLi}.)

The paper is organized as follows. Section \ref{Sec background} gives some preliminaries on algebraic functions and Bergman kernels of complex analytic spaces. Section \ref{Sec transformation law} is devoted to establishing the transformation formula in Theorem \ref{BK transformation thm}. Then in Section \ref{Sec Bergman kernel for finite ball quotient} we apply it to show that every standard algebraic realization (in particular, Cartan's canonical realization) of a finite ball quotient must have algebraic Bergman kernel, and thus prove the "if" implication in Theorem \ref{main theorem intro}. Section \ref{Sec Algebraic BK} gives the proof of the "only if" implication in Theorem \ref{main theorem intro}, as well as those of Corollaries \ref{main corollary intro} and \ref{main corollary 2 intro}. In Section \ref{Sec counterexample} and Appendix \ref{Sec Appendix}, we construct the counterexample mentioned above to the corresponding statement of Theorem \ref{main theorem intro} in higher dimensions.

{\bf Acknowledgment.} The second author thanks Xiaojun Huang for many inspiring conversations on quotient singularities.

\section{Preliminaries}\label{Sec background}

\subsection{Algebraic Functions}

In this subsection, we will review some basic facts about algebraic functions. For more details, we refer the readers to \cite[Chapter 5.4]{BER} and \cite{HuJi02}.

\begin{defn}[Algebraic functions and maps]
Let $\mathbb{K}$ be the field $\mathbb{R}$ or $\mathbb{C}$. Let $U\subset \mathbb{K}^n$ be a domain. A $\mathbb{K}-$analytic function $f: U\rightarrow \mathbb{K}$ is said to be $\mathbb{K}-$algebraic (i.e., real/complex-algebraic) on $U$ if there is a non-trivial polynomial $P(x,y)\in \mathbb{K}[x,y]$, with $(x,y)\in \mathbb{K}^n\times \mathbb{K}$, such that $P(x,f(x))=0$ for all $x\in U$.
We say that a $\mathbb{K}-$analytic map $F: U \rightarrow \mathbb{C}^N$ is $\mathbb{K}-$algebraic if each of its components is so on $U$.
\end{defn}

\begin{rmk}\label{rmk:RvsC} We make two remarks:
\begin{itemize}
\item[(i)] If $f(x)$ is an $\bK$-analytic function in a domain $U\subset\bK^n$, then $f$ is $\bK$-algebraic if and only if it is $\bK$-algebraic in some neighborhood of any point $x_0\in U$.
\item[(ii)] If $f(x)$ is an $\bR$-analytic function in a domain $U\subset\bR^n$, then there is domain $\hat U\subset \bC^n$ containing $U\subset\bR^n\subset\bC^n$ and a $\bC$-analytic (i.e., holomorphic) function $g(x+iy)$ in $\hat U$ such that $f=g|_U$; i.e., $f(x)=g(x)$ for $x\in U$. Moreover, $f$ is $\bR$-algebraic if and only if $g$ is $\bC$-algebraic.
\end{itemize}
\end{rmk}




We say a differential form on $U\subset\mathbb{C}^n\cong\mathbb{R}^{2n}$ is real-algebraic  if each of its coefficient functions is so. We can also define real-algebraicity of a differential form on an affine (algebraic) variety.

\begin{defn}\label{algebraic on affine variety def}
Let $V\subset \mathbb{C}^N$ be an affine variety and write $\Reg V$ for the set of its regular points. Let $\phi$ be a real analytic differential form on $\Reg V$.
We say $\phi$ is real-algebraic on $V$ if for every point $z_0\in \Reg V$, there exists a real-algebraic differential form $\psi$ in a neighborhood $U$ of $z_0$ in $\mathbb{C}^N\cong\mathbb{R}^{2N}$ such that
	\begin{equation*}
		\psi|_V=\phi, \quad \mbox{ on } U\cap V.
	\end{equation*}
\end{defn}

Let $T_{z_0}V\cong T^{1,0}_{z_0}V$ be the complex tangent space of $V$ at a smooth point $z_0\in V$ considered as an affine complex subspace in $\bC^n$ through $z_0$, and let $\xi=(\xi_1,\cdots,\xi_n)$ be affine coordinates for $T_{z_0}V$. Since $V$ can be realized locally as a graph over $T_{z_0}V$, the real and imaginary parts of $\xi$ also serve as local real coordinates for $V$ near $z_0$. We call such coordinates the \emph{canonical extrinsic coordinates at $z_0$}. Then the following statements are equivalent.

\begin{itemize}
	\item[(a)] $\phi$ is real-algebraic on $\Reg V$ (in the sense of Definition \ref{algebraic on affine variety def}).
	\item[(b)] For any $z_0\in \Reg V$, $\phi$ is real-algebraic in canonical extrinsic coordinates at $z_0$.
\end{itemize}

If in addition, there is a domain $G\subset \mathbb{C}^n$ and a $\bC$-algebraic (i.e., holomorphic algebraic) immersion $f: G\rightarrow \mathbb{C}^N$ such that $f(G)=\Reg V$, then (a) and (b) are further equivalent to
\begin{itemize}
	\item[(c)] $f^*\phi$ is real-algebraic on $G$.
\end{itemize}

\begin{rmk}
We can define complex-algebraicity of $(p,0)-$forms, $p>0,$
on an complex affine (algebraic) variety in a similar manner as in Definition \ref{algebraic on affine variety def}.
\end{rmk}

\subsection{The Bergman Kernel}
In this section, we will briefly review some properties of the Bergman kernel on a complex manifold. More details can be found in \cite{KoNo}.

Let $M$ be an n-dimensional complex manifold. Write $L^2_{(n,0)}(M)$ for the space of $L^2$-integrable $(n,0)$ forms on $M,$  which is equipped with the following inner product:
\begin{equation}\label{inner product}
(\varphi,\psi)_{L^2(M)}:=i^{n^2}\int_{M}\varphi\wedge\oo{\psi},
\quad \varphi,\psi \in L^2_{(n,0)}(M),
\end{equation}

Define the {\em Bergman space} of $M$  to be
\begin{equation}\label{Bergman space of forms}
A^2_{(n,0)}(M):=\bigl\{\varphi \in L^2_{(n,0)}(M): \varphi \mbox{ is a holomorphic $(n,0)$ form on $M$} \}.
\end{equation}

Assume $A^2_{(n,0)}(M) \neq \{0\}$. Then $A^2_{(n,0)}(M)$ is a separable Hilbert space. Taking any orthonormal basis $\{\varphi_k\}_{k=1}^{q}$ of $A^2_{(n,0)}(M)$ (here $1 \leq q \leq \infty$), we define the {\em Bergman kernel (form)} of $M$ to be
\begin{equation*}
K_M(x,\bar{y})=i^{n^2}\sum_{k=1}^{q} \varphi_k(x)\wedge \oo{\varphi_k(y)}.
\end{equation*}
Then, $K_M(x, \bar{x})$ is a real-valued, real analytic form of degree $(n,n)$ on $M$ and is independent of the choice of orthonormal basis. When $M$ is also (the set of regular points on) an affine variety,  we say that the Bergman kernel of $M$ is {\em algebraic} if $K_{M}(x, \bar{x})$ is real-algebraic in the sense of Definition \ref{algebraic on affine variety def}.  The following definitions and facts are standard in literature.

\begin{defn}[Bergman projection]
	Given $g\in L^2_{(n,0)}(M)$, we define for $x\in M$
	\begin{equation*}
		Pg(x)=\int_Mg(\zeta)\wedge K_M(x,\bar{\zeta}):=i^{n^2}\sum_{k=1}^{q}\Bigl(\int_Mg(\zeta)\wedge\oo{\varphi_k(\zeta)}\Bigr)\varphi_k(x).
	\end{equation*}
\end{defn}
$P\colon L^2_{(n,0)}\to A^2_{(n,0)}(M)$ is called the {\em Bergman projection}, and is the orthogonal projection to the Bergman space $A^2_{(n,0)}(M)$.

The Bergman kernel form remains unchanged if we remove a proper complex analytic subvariety. The following theorem is from \cite{Ko}.
\begin{thm}[\cite{Ko}]\label{Kobayashi thm}
	If $M'$ is a domain in an $n$-dimensional complex manifold $M$ and if $M-M'$ is a complex analytic subvariety of $M$ of complex dimension $\leq n-1$, then
	\begin{equation*}
		K_M(x,\bar{y})=K_{M'}(x,\bar{y}) \quad \mbox{ for any y }\in M'.
	\end{equation*}
\end{thm}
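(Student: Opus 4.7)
The plan is to show that restriction and extension give an isometric isomorphism between the Bergman spaces $A^2_{(n,0)}(M)$ and $A^2_{(n,0)}(M')$, whence the two Bergman kernels, being defined intrinsically from any orthonormal basis, must agree on $M'\times M'$.

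First, I would set up the restriction map $R\colon A^2_{(n,0)}(M)\to A^2_{(n,0)}(M')$, $\varphi\mapsto \varphi|_{M'}$. Since $M-M'$ is a proper complex analytic subvariety, it has vanishing $2n$-dimensional Hausdorff measure; therefore the inner product \eqref{inner product} of $\varphi|_{M'}$ with itself over $M'$ equals that of $\varphi$ over $M$, so $R$ is a well-defined linear isometry into $A^2_{(n,0)}(M')$.

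The heart of the argument is to invert $R$ by an $L^2$ removable-singularity statement: every $\psi\in A^2_{(n,0)}(M')$ extends to a (unique) holomorphic $(n,0)$-form $\tilde\psi$ on all of $M$. In local holomorphic coordinates $(z_1,\dots,z_n)$ on $M$, write $\psi=f(z)\,dz_1\wedge\cdots\wedge dz_n$ with $f$ holomorphic on the complement of the analytic set $V:=M-M'$, and note that the $L^2$ hypothesis becomes $f\in L^2_{\mathrm{loc}}$. By Riemann's extension theorem in its $L^2$ form for analytic subvarieties of codimension $\geq 1$ (which one proves first across the smooth codimension-$1$ stratum of $V$ via a Laurent expansion in a transverse coordinate — the $L^2$ bound forces vanishing of all negative-order terms — and then handles the lower-dimensional strata inductively, reducing to the classical Riemann removable singularity statement for bounded holomorphic functions), $f$ extends holomorphically across $V$. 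Doing this in a coordinate cover patches together to a globally defined $\tilde\psi$ on $M$. By the vanishing-measure observation above, $\|\tilde\psi\|_{L^2(M)}=\|\psi\|_{L^2(M')}$, so $\tilde\psi\in A^2_{(n,0)}(M)$ and $R(\tilde\psi)=\psi$.

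Consequently $R$ is a surjective linear isometry. If $\{\varphi_k\}_{k=1}^q$ is an orthonormal basis of $A^2_{(n,0)}(M)$, then $\{\varphi_k|_{M'}\}_{k=1}^q$ is an orthonormal basis of $A^2_{(n,0)}(M')$, and comparing the series definitions of the two Bergman kernel forms on $M'\times M'$ gives
\begin{equation*}
K_{M'}(x,\bar y)=i^{n^2}\sum_{k=1}^{q}\varphi_k(x)\wedge\overline{\varphi_k(y)}=K_M(x,\bar y), \quad x,y\in M'.
\end{equation*}
The only real obstacle is the $L^2$ removable-singularity step; everything else is formal from the definition of the Bergman kernel. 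I would cite the standard proof (e.g.\ in Grauert--Remmert) rather than reproduce it, since the stratified induction on $\dim V$ is routine once the codimension-$1$ smooth case is handled by the Laurent-expansion argument.
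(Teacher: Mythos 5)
Your proof is correct and is essentially the classical argument of Kobayashi that the paper simply cites from \cite{Ko} without reproducing: the subvariety $M-M'$ has measure zero, so restriction is an isometry of Bergman spaces, and the $L^2$ Riemann removable singularity theorem across analytic sets makes it surjective, whence the kernel series coincide on $M'\times M'$. Nothing further is needed.
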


This theorem suggests the following generalization of the Bergman kernel form to complex analytic spaces.
\begin{defn}
Let $M$ be a reduced complex analytic space, and let $V\subset M$ denote its set of singular points. The Bergman kernel form of $M$ is defined as
	\begin{equation*}
		K_M(x,\bar{y})=K_{M-V}(x,\bar{y})\quad \mbox{ for any } x, y\in M-V,
	\end{equation*}
	where $K_{M-V}$ denotes the Bergman kernel form of the complex manifold consisting of regular points of $M$.
\end{defn}

Let $N_1, N_2$ be two complex manifolds of dimension $n$. Let $\gamma: N_1\rightarrow M$ and $\tau: N_2\rightarrow M$ be holomorphic maps. The pullback of the Bergman kernel $K_M(x,\bar{y})$ of $M$ to $N_1 \times N_2$ is defined in the standard way. That is, for any $z\in N_1, w\in N_2$,
\begin{equation*}
	\bigl( (\gamma,\tau)^*K\bigr)(z,\bar{w})=\sum_{k=1}^{q} \gamma^*\varphi_k(z)\wedge \oo{\tau^*\varphi_k(w)}.
\end{equation*}
In terms of local coordinates, writing the Bergman kernel form of $M$ as
\begin{equation*}
	K_M(x,\bar{y})=\widetilde{K}(x,\bar{y})dx_1\wedge\cdots dx_n\wedge d\oo{y_1}\wedge\cdots \wedge d\oo{y_n},
\end{equation*}
we have
\begin{equation*}
	\bigl( (\gamma,\tau)^*K_M\bigr)(z,\bar{w})=\widetilde{K}(\gamma(z),\oo{\tau(w)})\,J_{\gamma}(z)\,\oo{J_{\tau}(w)}\,dz_1\wedge\cdots dz_n\wedge d\oo{w_1}\wedge\cdots \wedge d\oo{w_n},
\end{equation*}
where $J_{\gamma}$ and $J_{\tau}$ are the Jacobian determinants of the maps $\gamma$ and $\tau$, respectively.

\section{The transformation law for the Bergman kernel}\label{Sec transformation law}

In this section, we shall prove Theorem \ref{BK transformation thm}.
For this, we shall adapt the ideas in \cite{Bell82} to our situation. More precisely, we shall first prove the following transformation law for the Bergman projections. Then \eqref{BK transformation eq} will follow readily by comparing the associated distributional kernels for the projection operators.

\begin{prop}\label{BP transformation prop}
	Under the assumptions and notation in Theorem $\ref{BK transformation thm}$, we denote by $n$ the complex dimension of $M_1-V_1$ and $M_2-V_2$. Let $P_i: L^2_{(n,0)}(M_i-V_i)\rightarrow A^2_{(n,0)}(M_i-V_i)$  e the Bergman projection for $i=1,2$. Then the Bergman projections transform according to
	\begin{equation}\label{BP transformation eq}
		P_1(f^*\phi)=f^*(P_2\phi) \quad \mbox{ for any } \phi\in L^2_{(n,0)}(M_2-V_2).
	\end{equation}
\end{prop}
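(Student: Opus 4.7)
The plan is to adapt Bell's classical strategy, constructing the form $f^{*}(P_{2}\phi)$ out of $P_{1}(f^{*}\phi)$ by a descent argument driven by $\Gamma$-invariance. The three key ingredients are: (i) an $L^{2}$-identity for the $m$-sheeted cover, namely $(f^{*}\alpha, f^{*}\beta)_{L^{2}(M_{1}-V_{1})} = m\,(\alpha,\beta)_{L^{2}(M_{2}-V_{2})}$ for $\alpha,\beta \in L^{2}_{(n,0)}(M_{2}-V_{2})$, which follows by partitioning $M_{1}-V_{1}$ into sheets over coordinate charts downstairs and using that $f$ is locally biholomorphic; (ii) each $\gamma \in \Gamma$ is a biholomorphism of $M_{1}-V_{1}$, so $\gamma^{*}$ is a unitary operator on $L^{2}_{(n,0)}(M_{1}-V_{1})$ that preserves $A^{2}_{(n,0)}(M_{1}-V_{1})$; and (iii) the Galois descent principle: since $f$ is an (unbranched) covering on $M_{1}-V_{1}$ with deck group $\Gamma$ acting simply transitively on fibers, pullback $f^{*}$ is an injection whose image consists precisely of the $\Gamma$-invariant forms.

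First, I would observe that $f^{*}\phi$ is $\Gamma$-invariant, because $f\circ\gamma = f$ for every $\gamma \in \Gamma$ forces $\gamma^{*}(f^{*}\phi) = f^{*}\phi$. Next, by (ii) combined with the uniqueness of the orthogonal projection onto $A^{2}_{(n,0)}(M_{1}-V_{1})$, one has $\gamma^{*} \circ P_{1} = P_{1} \circ \gamma^{*}$. Applying this to $\alpha = f^{*}\phi$ shows that $P_{1}(f^{*}\phi)$ is itself $\Gamma$-invariant.

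By the descent principle (iii), there is a unique holomorphic $(n,0)$-form $\eta$ on $M_{2}-V_{2}$ such that $f^{*}\eta = P_{1}(f^{*}\phi)$; and since $P_{1}(f^{*}\phi) \in A^{2}_{(n,0)}(M_{1}-V_{1})$, the $L^{2}$-identity (i) gives $m\,\|\eta\|^{2}_{L^{2}(M_{2}-V_{2})} = \|P_{1}(f^{*}\phi)\|^{2}_{L^{2}(M_{1}-V_{1})} < \infty$, so $\eta \in A^{2}_{(n,0)}(M_{2}-V_{2})$. It remains to show that $\eta = P_{2}\phi$. For any $\rho \in A^{2}_{(n,0)}(M_{2}-V_{2})$, the form $f^{*}\rho$ lies in $A^{2}_{(n,0)}(M_{1}-V_{1})$, hence
\begin{equation*}
(\phi - \eta, \rho)_{L^{2}(M_{2}-V_{2})} = \tfrac{1}{m}(f^{*}\phi - f^{*}\eta,\, f^{*}\rho)_{L^{2}(M_{1}-V_{1})} = \tfrac{1}{m}(f^{*}\phi - P_{1}(f^{*}\phi),\, f^{*}\rho)_{L^{2}(M_{1}-V_{1})} = 0.
\end{equation*}
Since $\eta$ itself belongs to $A^{2}_{(n,0)}(M_{2}-V_{2})$, this forces $\eta = P_{2}\phi$, giving the desired identity $P_{1}(f^{*}\phi) = f^{*}(P_{2}\phi)$.

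The main technical step requiring care is the combination of (i) and (iii): verifying that the $\Gamma$-invariant form $P_{1}(f^{*}\phi)$ really does descend to a holomorphic $L^{2}$ form $\eta$ on $M_{2}-V_{2}$, and that the integration identity (i) is valid in this generality (where $M_{2}-V_{2}$ need not be a domain in $\mathbb{C}^{n}$). Both follow from the fact that $f$ is locally biholomorphic together with the fiberwise $m$-to-$1$ covering property; once these are in hand, the rest of the argument is a routine application of the orthogonal projection formalism. With Proposition \ref{BP transformation prop} established, Theorem \ref{BK transformation thm} will follow by comparing the distributional kernels of the operators $P_{1}\circ f^{*}$ and $f^{*}\circ P_{2}$ and invoking the reproducing property of the Bergman kernel forms.
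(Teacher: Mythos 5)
Your argument is correct, but it follows a genuinely different route from the paper's. The paper decomposes $\phi$ as $P_2\phi+(\phi-P_2\phi)$ and shows that $f^*$ preserves both $A^2_{(n,0)}(M_2-V_2)$ and its orthogonal complement; the key tool for the second half is the adjoint (``trace'') operator $\sum_{k=1}^m F_k^*$ built from the local inverses of $f$, together with the pairing identity $\bigl(v,f^*\phi\bigr)_{L^2(M_1-V_1)}=\bigl(\sum_k F_k^*(v),\phi\bigr)_{L^2(M_2-V_2)}$. You instead work on the other side of the identity: you note that each deck transformation acts unitarily on $L^2_{(n,0)}(M_1-V_1)$ preserving the Bergman space, so $P_1$ commutes with the $\Gamma$-action, conclude that $P_1(f^*\phi)$ is $\Gamma$-invariant, descend it through the Galois covering (legitimate here since $|\Gamma|=m$ equals the sheet number, so $\Gamma$ acts simply transitively on fibers), and then identify the descended form with $P_2\phi$ by the orthogonality characterization of the projection, using only the polarized $L^2$ pullback identity (your (i), which is the paper's Lemma 3.2 up to polarization). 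Your route is conceptually cleaner and makes the role of regularity of the covering explicit, at the cost of the descent verification (well-definedness and holomorphy of $\eta$, which you correctly reduce to local biholomorphy of $f$ plus $\Gamma$-invariance). The paper's route buys something you will need anyway: the adjoint relation of its Lemma 3.3 is reused verbatim in deducing Theorem 1.5 from the proposition (to rewrite $P_1(f^*\phi)$ as an integral against $\sum_k(\id,F_k)^*K_1$), so if you carry your approach through to the theorem you will still have to establish that pairing identity, or an equivalent of it, when you compare the kernels of $P_1\circ f^*$ and $f^*\circ P_2$.
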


We first check that $f^*\phi\in L^2_{(n,0)}(M_1-V_1)$ if $\phi\in L^2_{(n,0)}(M_2-V_2)$ in the next lemma. Recall that $f$ is an $m$-sheeted covering map $M_1-V_1\rightarrow M_2-V_2$.
\begin{lemma}\label{integral between cover and base I lemma}
	\begin{equation*}
		\|f^*\phi\|_{L^2(M_1-V_1)}=m^{\frac{1}{2}}\|\phi\|_{L^2(M_2-V_2)} \quad \mbox{ for any } \phi\in L^2_{(n,0)}(M_2-V_2).
	\end{equation*}
\end{lemma}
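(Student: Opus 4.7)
The plan is to exploit the local structure of $f$ as an unramified covering map. Since $f$ is an $m$-sheeted holomorphic covering, every point $p\in M_2-V_2$ admits an evenly covered neighborhood $U$ such that $f^{-1}(U)=\bigsqcup_{j=1}^m U_j$ with each restriction $f_j:=f|_{U_j}\colon U_j\to U$ a biholomorphism. In particular, $f$ has no critical points, so the change of variables formula applies cleanly sheet-by-sheet.

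First I would choose a countable, locally finite open cover $\{U_\alpha\}$ of $M_2-V_2$ by evenly covered sets together with a subordinate smooth partition of unity $\{\rho_\alpha\}$. Writing the definition \eqref{inner product} for the norm gives
\[
\|\phi\|^2_{L^2(M_2-V_2)}=i^{n^2}\sum_\alpha \int_{U_\alpha} \rho_\alpha\, \phi\wedge \oo{\phi}.
\]
On the cover side, the family $\{\rho_\alpha\circ f\}$ is a partition of unity subordinate to $\{f^{-1}(U_\alpha)\}=\{\bigsqcup_j U_{\alpha,j}\}$, so
\[
\|f^*\phi\|^2_{L^2(M_1-V_1)}=i^{n^2}\sum_\alpha \int_{f^{-1}(U_\alpha)}(\rho_\alpha\circ f)\, f^*\phi\wedge \oo{f^*\phi}=i^{n^2}\sum_\alpha\sum_{j=1}^m \int_{U_{\alpha,j}} f_j^*\bigl(\rho_\alpha\, \phi\wedge \oo{\phi}\bigr).
\]

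Next I would apply the change of variables formula. Because each $f_j$ is a biholomorphism from $U_{\alpha,j}$ onto $U_\alpha$, and $\rho_\alpha\, \phi\wedge \oo{\phi}$ is a top-degree form on $U_\alpha$, we have
\[
\int_{U_{\alpha,j}} f_j^*\bigl(\rho_\alpha\, \phi\wedge \oo{\phi}\bigr)=\int_{U_\alpha}\rho_\alpha\, \phi\wedge \oo{\phi}.
\]
Summing over the $m$ sheets produces the factor $m$, and then summing over $\alpha$ yields
\[
\|f^*\phi\|^2_{L^2(M_1-V_1)}=m\,\|\phi\|^2_{L^2(M_2-V_2)},
\]
which is the desired identity. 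This computation also shows \emph{a posteriori} that $f^*\phi\in L^2_{(n,0)}(M_1-V_1)$ whenever $\phi\in L^2_{(n,0)}(M_2-V_2)$, by Tonelli applied to the nonnegative integrand $i^{n^2}\, f^*\phi\wedge \oo{f^*\phi}$.

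I do not anticipate any real obstacle here: the statement is essentially the fact that pulling back along an $m$-sheeted covering multiplies top-form integrals by $m$. The only housekeeping is the existence of a locally finite evenly covered refinement (standard for covering spaces of paracompact Hausdorff spaces) and the verification that pullback under a biholomorphism is compatible with both the $(n,0)$-form and the conjugate $(0,n)$-form factors appearing in \eqref{inner product}.
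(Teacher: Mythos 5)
Your proposal is correct and follows essentially the same route as the paper: a locally finite cover of $M_2-V_2$ by evenly covered sets, a subordinate partition of unity pulled back to $M_1-V_1$, and sheet-by-sheet change of variables under the local biholomorphisms, yielding the factor $m$. The only difference is cosmetic (you compute the cover-side norm directly, while the paper writes the base-side integral as $\frac{1}{m}$ times the cover-side one), so there is nothing to add.
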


\begin{proof}
	Let $\{U_j\}$ be a countable, locally finite open cover of $M_2-V_2$ such that
	\begin{itemize}
		\item  each $U_j$ is relatively compact;
		\item  $f^{-1}(U_j)=\cup_{k=1}^m V_{j,k}$ for some pairwise disjoint open sets $\{V_{j,k}\}_{k=1}^m$ on $M_1-V_1$;
		\item $f:V_{j, k}\rightarrow U_j$ is a biholomorphsm for each $j=1,2,\cdots m$.
	\end{itemize}
	Let $\{\rho_j\}$ be a partition of unity subordinate to the cover $\{U_j\}$. Then
	\begin{align*}
		i^{n^2}\int_{M_2-V_2} \phi\wedge\oo{\phi}
		=\sum_ji^{n^2}\int_{U_j}\rho_j \phi\wedge\oo{\phi}=\frac{1}{m} \sum_j\sum_{k=1}^m i^{n^2}\int_{V_{j,k}} (f^*\rho_j)\, f^*\phi\wedge\oo{f^*\phi}.
	\end{align*}
	Note that $\{f^*\rho_j\}$ is a partition of unity subordinate to the countable, locally finite open cover $\{\cup_{k=1}^m V_{j,k}\}$ of $M_1-V_1$. Thus,
	\begin{align*}
		\frac{1}{m} \sum_j\sum_{k=1}^m i^{n^2}\int_{V_{j,k}} (f^*\rho_j)\, f^*\phi\wedge\oo{f^*\phi}
		=&\frac{1}{m} \sum_j i^{n^2}\int_{\cup_{k=1}^mV_{j,k}} (f^*\rho_j)\, f^*\phi\wedge\oo{f^*\phi}
		\\
		=&\frac{1}{m} i^{n^2}\int_{M_1-V_1} f^*\phi\wedge\oo{f^*\phi}.
	\end{align*}
The result therefore follows immediately.
\end{proof}

Let $F_1, F_2, \cdots, F_m$ be the $m$ local inverses to $f$ defined locally on $M_2-V_2$. Note that $\sum_{k=1}^mF_k^*$ is a well-defined operator on $L_{(n,0)}^2 (M_1-V_1)$, though each individual $F_k$ is only locally defined.
\begin{lemma}\label{integral between cover and base II lemma}
	Let $v\in L^2_{(n,0)}(M_1-V_1)$ and $\phi\in L^2_{(n,0)}(M_2-V_2)$. Then $\sum_{k=1}^m F_k^*(v) \in L^2_{(n,0)}(M_2-V_2)$ and
	\begin{equation}\label{integral between cover and base II eq}
		\bigl(v,f^*\phi\bigr)_{L^2(M_1-V_1)}=\bigl(\sum_{k=1}^m F_k^*(v), \phi\bigr)_{L^2(M_2-V_2)}.
	\end{equation}
\end{lemma}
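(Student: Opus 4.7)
The plan is to mimic the partition-of-unity argument used in Lemma \ref{integral between cover and base I lemma}, choosing a locally finite cover $\{U_j\}$ of $M_2-V_2$ over which the covering $f$ trivializes, together with a subordinate partition of unity $\{\rho_j\}$. For each $j$ write $f^{-1}(U_j)=\bigsqcup_{k=1}^{m}V_{j,k}$ with $f\colon V_{j,k}\to U_j$ a biholomorphism. On $U_j$, after a suitable labeling, the local inverses $F_1,\dots,F_m$ of $f$ are exactly the maps $(f|_{V_{j,k}})^{-1}$. Although the labeling of the $F_k$'s on overlapping trivializations is only canonical up to a permutation in $\Gamma$, the sum $\sum_{k=1}^{m}F_k^*(v)$ is invariant under these permutations, so it defines a single measurable $(n,0)$-form globally on $M_2-V_2$.

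To verify $\sum_k F_k^*(v)\in L^2_{(n,0)}(M_2-V_2)$, I would use Cauchy--Schwarz together with a change of variables: on each $U_j$,
\begin{equation*}
i^{n^2}\int_{U_j}\rho_j\,F_k^*(v)\wedge\overline{F_k^*(v)}
=i^{n^2}\int_{V_{j,k}}(f^*\rho_j)\,v\wedge\overline{v},
\end{equation*}
and summing in $k$ and $j$ gives $\sum_{k=1}^m\|F_k^*(v)\|_{L^2(M_2-V_2)}^2=\|v\|_{L^2(M_1-V_1)}^2$, whence
\begin{equation*}
\Bigl\|\sum_{k=1}^{m}F_k^*(v)\Bigr\|_{L^2(M_2-V_2)}^{2}
\leq m\sum_{k=1}^{m}\|F_k^*(v)\|_{L^2(M_2-V_2)}^{2}
=m\,\|v\|_{L^2(M_1-V_1)}^{2}.
\end{equation*}

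For the adjoint identity \eqref{integral between cover and base II eq}, I would expand the left hand side using $\{f^*\rho_j\}$, which is a partition of unity subordinate to the cover $\{f^{-1}(U_j)\}$ of $M_1-V_1$:
\begin{equation*}
(v,f^*\phi)_{L^2(M_1-V_1)}
=\sum_{j}\sum_{k=1}^{m}i^{n^2}\int_{V_{j,k}}(f^*\rho_j)\,v\wedge f^*\overline{\phi}.
\end{equation*}
On each $V_{j,k}$, pushing forward by $f$ (equivalently, pulling back by $F_k$) converts the integral over $V_{j,k}$ into an integral over $U_j$ with $v$ replaced by $F_k^*(v)$ and $f^*\overline{\phi}$ replaced by $\overline{\phi}$. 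Collecting the sum over $k$ inside the integral then yields
\begin{equation*}
\sum_{j}i^{n^2}\int_{U_j}\rho_j\,\Bigl(\sum_{k=1}^{m}F_k^*(v)\Bigr)\wedge\overline{\phi}
=\Bigl(\sum_{k=1}^{m}F_k^*(v),\phi\Bigr)_{L^2(M_2-V_2)},
\end{equation*}
which is the desired equality.

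The only subtle point, and what I would regard as the main bookkeeping obstacle, is to check that the local inverses $F_k$ can be consistently relabeled on each $U_j$ so that $F_k=(f|_{V_{j,k}})^{-1}$, and to verify that the global form $\sum_{k=1}^{m}F_k^*(v)$ is indeed independent of such relabelings; once this is observed, the change-of-variables and partition-of-unity manipulation is routine because $v$ need only be measurable, not holomorphic, for the pointwise computation to go through.
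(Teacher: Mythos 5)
Your proof is correct and follows essentially the same partition-of-unity and change-of-variables argument as the paper; the adjoint identity $\bigl(v,f^*\phi\bigr)_{L^2(M_1-V_1)}=\bigl(\sum_{k=1}^m F_k^*(v),\phi\bigr)_{L^2(M_2-V_2)}$ is obtained by exactly the same local computation, and your remark on the permutation-invariance of $\sum_k F_k^*$ matches the paper's observation that this sum is globally well defined even though each $F_k$ is only local. The only (immaterial) difference is the $L^2$-membership check: you estimate downstairs via Cauchy--Schwarz and the identity $\sum_{k=1}^m\|F_k^*(v)\|^2_{L^2(M_2-V_2)}=\|v\|^2_{L^2(M_1-V_1)}$, whereas the paper pulls $\sum_k F_k^*(v)$ back to $M_1-V_1$, identifies it with $\sum_{\gamma\in\Gamma}\gamma^*v$, and uses the triangle inequality together with the unitarity of the deck transformations --- both routes give the same bound $m^{1/2}\|v\|_{L^2(M_1-V_1)}$.
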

\begin{proof}
We first verify $\sum_{k=1}^m F_k^*(v) \in L^2_{(n,0)}(M_2-V_2)$. For that we note
\begin{equation*}
	f^*\sum_{k=1}^m F_k^*(v)=\sum_{\gamma\in \Gamma}\gamma^*v.
\end{equation*}
By the same argument as in Lemma \ref{integral between cover and base I lemma}, we have
\begin{equation}\label{eqlemma34}
	\bigl\|\sum_{\gamma\in \Gamma}\gamma^*v\bigr\|_{L^2(M_1-V_1)}=m^{\frac{1}{2}}\bigl\|\sum_{k=1}^m F_k^*(v)\bigr\|_{L^2(M_2-V_2)}.
\end{equation}
Since each deck transformation $\gamma: M_1-V_1\rightarrow M_1-V_1$ is biholomorphic, it follows that
\begin{align*}
	\bigl\|\sum_{\gamma\in \Gamma}\gamma^*v\bigr\|_{L^2(M_1-V_1)}
	\leq \sum_{\gamma\in \Gamma}\bigl\|\gamma^*v\bigr\|_{L^2(M_1-V_1)}=m\|v\|_{L^2(M_1-V_1)}.
\end{align*}
Therefore by \eqref{eqlemma34}, $\sum_{k=1}^m F_k^*(v) \in L_{(n,0)}^2(M_2-V_2)$.

Now we are ready to prove \eqref{integral between cover and base II eq}. Let $\{U_j\}$, $\{V_{j,k}\}$ and $\{\rho_j\}$ be the open covers and partition of unity as in Lemma \ref{integral between cover and base I lemma}. Then
\begin{align*}
	\bigl(\sum_{k=1}^m F_k^*(v), \phi\bigr)_{L^2(M_2-V_2)}=\sum_{j}i^{n^2}\int_{U_j}\rho_j\sum_{k=1}^m F_k^*(v)\wedge \oo{\phi}.
\end{align*}
Note that every $F_k: U_j\rightarrow V_{j,k}$ is biholomorphic and the inverse of $f: V_{j,k}\rightarrow U_j$. Thus,
\begin{align*}
	\sum_{j}i^{n^2}\int_{U_j}\rho_j\sum_{k=1}^m F_k^*(v)\wedge \oo{\phi}=\sum_{j}\sum_{k=1}^mi^{n^2}\int_{V{j,k}}(f^*\rho_j) v\wedge \oo{f^*\phi}=(v,f^*\phi)_{L^2(M_1-V_1)}.
\end{align*}
The last equality follows from the fact that $\{f^*\rho_j\}$ is a partition of unity subordinate to the countable, locally finite open cover $\{\cup_{k=1}^m V_{j,k}\}$ of $M_1-V_1$. This proves \eqref{integral between cover and base II eq}.
\end{proof}

We are now ready to prove Proposition \ref{BP transformation prop}.
\begin{proof}[Proof of Proposition \ref{BP transformation prop}]
If $\phi\in A^2_{(n,0)}(M_2-V_2)$, then $f^*\phi\in A^2_{(n,0)}(M_1-V_1)$ by Lemma \ref{integral between cover and base I lemma}, whence \eqref{BP transformation eq} holds trivially. It thus suffices to prove \eqref{BP transformation eq} for $\phi\in A^2_{(n,0)}(M_2-V_2)^{\perp}$. In this case, \eqref{BP transformation eq} reduces to
\begin{equation*}
	P_1(f^*\phi)=0 \quad \mbox{ for any } \phi\in A^2_{(n,0)}(M_2-V_2)^{\perp};
\end{equation*}
i.e., $\phi\in A^2_{(n,0)}(M_2-V_2)^{\perp}$ implies that $f^*\phi\in A^2_{(n,0)}(M_1-V_1)^{\perp}$. To prove this, we note that for any $v\in A^2_{(n,0)}(M_1-V_1)$, we have by Lemma \ref{integral between cover and base II lemma}
\begin{align*}
	\bigl(v,f^*\phi)_{L^2(M_1-V_1)}=\bigl(\sum_{k=1}^m F_k^*(v), \phi\bigr)_{L^2(M_2-V_2)}=0.
\end{align*}
The last equality follows from the fact $\phi\in
 A^2_{(n,0)}(M_2-V_2)^{\perp}$. Thus, $f^*\phi\in A^2_{(n,0)}(M_1-V_1)^{\perp}$ and the proof is completed.
\end{proof}

We are now in a position to prove Theorem \ref{BK transformation thm}.
\begin{proof}[Proof of Theorem \ref{BK transformation thm}]
Let $\id_{M_i}$ be the identity map on $M_i$ for $i=1,2$. Recall that $\{F_k\}_{k=1}^m$ are local inverses of $f$. Note that $\sum_{k=1}^m(\id_{M_1}, F_k)^*K_1$ is a well-defined $(n,n)$ form on $(M_1-V_1)\times (M_2-V_2)$ though each $(\id_{M_1}, F_k)^*K_1$ is only locally defined.

We shall write out the Bergman projection transformation law \eqref{BP transformation eq} in terms of integrals of the Bergman kernel forms. For any $\phi\in L^2_{(n,0)}(M_2-V_2)$, by Lemma \ref{integral between cover and base II lemma} we have for any $z\in M_1-V_1$,
\begin{align*}
	P_1(f^*\phi)(z)=\int_{M_1-V_1} f^*\phi(\eta)\wedge K_1(z,\eta)=\int_{M_2-V_2}\phi(\eta)\wedge \sum_{k=1}^m(\id_{M_1}, F_k)^*K_1(z,\eta).
\end{align*}
On the other hand,
\begin{align*}
	P_2(\phi)(\xi)=\int_{M_2-V_2}\phi(\eta)\wedge K_2(\xi,\eta) \quad \mbox{ for any }\xi \in M_2-V_2.
\end{align*}
If we pull back the forms on both sides by $f$, then
\begin{align*}
	f^*P_2(\phi)(z)=\int_{M_2-V_2}\phi(\eta)\wedge (f,\id_{M_2})^*K_2(z,\eta) \quad \mbox{ for any }z \in M_1-V_1.
\end{align*}
Therefore, the Bergman projection transformation law \eqref{BP transformation eq} translates to
\begin{align*}
	\int_{M_2-V_2}\phi(\eta)\wedge \sum_{k=1}^m(\id_{M_1}, F_k)^*K_1(z,\eta)
	=\int_{M_2-V_2}\phi(\eta)\wedge (f,\id_{M_2})^*K_2(z,\eta).
\end{align*}
As this equality holds for any $\phi\in L^2_{(n,0)}(M_2-V_2)$, it follows that for any $z \in M_1-V_1$ and $\eta\in M_2-V_2$,
\begin{equation}
	\sum_{k=1}^m(\id_{M_1}, F_k)^*K_1(z,\eta)
	=(f,\id_{M_2})^*K_2(z,\eta).
\end{equation}
If we further pull back the forms on both sides by $(\id_{M_1},f): (M_1-V_1) \times (M_1-V_1)\rightarrow (M_1-V_1)\times (M_2-V_2)$, then we obtain for $z,w\in M_1-V_1$,
\begin{align}
	\sum_{k=1}^m(\id_{M_1}, F_k\circ f)^*K_1(z,w)
	=(f,f)^*K_2(z,w).
\end{align}
By using the notation $\gamma_k$ for the deck transformation $F_k\circ f$, we may write this as
\begin{align}\label{transformation law 1}
	\sum_{k=1}^m(\id_{M_1}, \gamma_k)^*K_1(z,w)
	=(f,f)^*K_2(z,w).
\end{align}

Note that
\begin{align*}
	\sum_{k=1}^m(\id_{M_1}, \gamma_k)^*K_1(z,w)=\sum_{k=1}^m(\gamma_k\circ \gamma_k^{-1}, \gamma_k\circ \id_{M_1})^*K_1(z,w)=\sum_{k=1}^m(\gamma_k^{-1}, \id_{M_1})^* (\gamma_k,\gamma_k)^*K_1(z,w).
\end{align*}
Since $\gamma_k$ is a biholomorphism on $M_1-V_1$, we have
\begin{equation*}
	(\gamma_k,\gamma_k)^*K_1(z,w)=K_1(z,w),
\end{equation*}
and hence
\begin{align*}
	\sum_{k=1}^m(\id_{M_1}, \gamma_k)^*K_1(z,w)=\sum_{k=1}^m(\gamma_k^{-1}, \id_{M_1})^* K_1(z,w)=\sum_{k=1}^m(\gamma_k, \id_{M_1})^* K_1(z,w).
\end{align*}
Theorem \ref{BK transformation thm} now follows by combining the above identity with \eqref{transformation law 1}.
\end{proof}

\section{Proof of Theorem \ref{main theorem intro}, part I:  Bergman kernels of ball quotients}
\label{Sec Bergman kernel for finite ball quotient}
In this section, we will apply the transformation law in Theorem \ref{BK transformation thm} to study the Bergman kernel form of a finite ball quotient and prove the  "if" implication in Theorem \ref{main theorem intro}. For this part, the restriction of the dimension of the algebraic variety to two is not needed, and we shall therefore consider the situation in an arbitrary dimension $n$.

Let $\mathbb{B}^n$ denote the unit ball in $\mathbb{C}^n$ and $\text{Aut}(\mathbb{B}^n)$ its (biholomorphic) automorphism group.
Let $\Gamma$ be a finite subgroup of $\text{Aut}(\mathbb{B}^n)$. As the unitary group $U(n)$ is a maximal compact subgroup of $\Aut(\mathbb{B}^n)$, by basic Lie group theory, there exists some $\psi\in \Aut(\mathbb{B}^n)$ such that $\Gamma\subset \psi^{-1}\cdot U(n)\cdot \psi$. Thus without loss of generality, we can assume $\Gamma\subset U(n)$, i.e., $\Gamma$ is a finite unitary group. Note that the origin $0\in \mathbb{C}^n$ is always a fixed point of every element in $\Gamma$. We say $\Gamma$ is \emph{fixed point free} if every $\gamma\in \Gamma-\{\id\}$ has no other fixed point, or equivalently, if every $\gamma\in \Gamma-\{\id\}$ has no fixed point on $\partial\mathbb{B}^n$. In this case, the action of $\Gamma$ on $\partial\mathbb{B}^n$ is properly discontinuous and $\partial\mathbb{B}^n/\Gamma$ is a smooth manifold.

By a theorem of Cartan \cite{Cartan}, the quotient $\mathbb{C}^n/\Gamma$ can be realized as a normal algebraic subvariety $V$ in some $\mathbb{C}^N$. To be more precise, we write $\mathcal{A}$ for the algebra of $\Gamma$ invariant holomorphic polynomials, that is,
\begin{equation*}
	\mathcal{A}:=\big\{p\in \mathbb{C}[z_1,\cdots, z_n]: p\circ\gamma=p \,\mbox{ for all }\gamma \in\Gamma \big\}.
\end{equation*}
By Hilbert's basis theorem, $\mathcal{A}$ is finitely generated. Moreover, we can find a minimal set of homogeneous polynomials $\{p_1, \cdots, p_N\}$ such that every $p\in \mathcal{A}$ can be expressed in the form
\begin{equation*}
	p(z)=q(p_1(z), \cdots, p_N(z)) \quad \mbox{ for }z\in \mathbb{C}^n,
\end{equation*}
where $q$ is some holomorphic polynomial in $\mathbb{C}^N$.
The map $Q:=(p_1,\cdots,p_N): \mathbb{C}^n\rightarrow \mathbb{C}^N$ is proper and induces a homeomorphism of $\mathbb{C}^n/\Gamma$ onto $V:=Q(\mathbb{C}^n)$. By Remmert's proper mapping theorem (see \cite{GH}), $V$ is an analytic variety. As $Q$ is a polynomial holomorphic map, $V$ is furthermore an algebraic variety. The restriction of $Q$ to the unit ball $\mathbb{B}^n$ maps $\mathbb{B}^n$ properly onto a relatively compact domain $G\subset V$. In this way, $\mathbb{B}^n/\Gamma$ is realized as $G$ by $Q$.
Following \cite{Rudin}, we call such $Q$ the \emph{basic map} associated to $\Gamma$.
The ball quotient $G=\mathbb{B}^n/\Gamma$ is nonsingular if and only if the group $\Gamma$ is generated by \emph{reflections}, i.e., elements of finite order in $U(n)$ that fix a complex subspace of dimension $n-1$ in $\mathbb{C}^n$ (see \cite{Rudin}); thus, if $\Gamma$ is fixed point free and nontrivial, then $G=\mathbb{B}^n/\Gamma$ must have singularities.
 Moreover, $G$ has smooth boundary if and only if $\Gamma$ is fixed point free (see \cite{Fo86} for more results along this line).

We are now in a position to state the following theorem, which implies the "if" implication in Theorem \ref{main theorem intro}.
\begin{thm} \label{main theorem backward direction}
Let $G$ be a domain in an algebraic variety $V$ in  $\mathbb{C}^N$ and $\Gamma \subset U(n)$  a finite  unitary subgroup with $|\Gamma|=m$. Suppose
there exist proper complex analytic varieties $V_1\subset \mathbb{B}^n$, $V_2 \subset G$ and $F: \mathbb{B}^n-V_1\rightarrow G-V_2$ such that $F$ is an m-sheeted covering map with deck transformation group $\Gamma$. If $F$ is algebraic, then the Bergman kernel form of $G$ is algebraic.
\end{thm}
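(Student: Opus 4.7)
The plan is to apply Theorem \ref{BK transformation thm} to the covering map $F:\mathbb{B}^n-V_1\to G-V_2$, use the classical explicit formula for $K_{\mathbb{B}^n}$ to make the right hand side manifestly algebraic, and then transfer algebraicity back to $K_G$ on $V$ via the equivalence (a)$\Leftrightarrow$(c) recorded after Definition \ref{algebraic on affine variety def}. Taking $M_1=\mathbb{B}^n$, $M_2=G$, $f=F$ in Theorem \ref{BK transformation thm} gives
\begin{equation*}
(F,F)^*K_G \;=\; \sum_{\gamma\in\Gamma}(\gamma,\id)^*K_{\mathbb{B}^n}\quad\text{on }(\mathbb{B}^n-V_1)\times(\mathbb{B}^n-V_1).
\end{equation*}
Substituting the classical formula
\begin{equation*}
K_{\mathbb{B}^n}(z,\bar w)=\frac{n!}{\pi^n}\,\frac{dz_1\wedge\cdots\wedge dz_n\wedge d\bar w_1\wedge\cdots\wedge d\bar w_n}{(1-\langle z,w\rangle)^{n+1}},
\end{equation*}
and using that each $\gamma\in\Gamma\subset U(n)$ is a linear unitary with constant Jacobian $\det\gamma$, the right hand side becomes
\begin{equation*}
\frac{n!}{\pi^n}\sum_{\gamma\in\Gamma}\frac{\det\gamma}{(1-\langle\gamma(z),w\rangle)^{n+1}}\,dz\wedge d\bar w,
\end{equation*}
a form whose coefficient is visibly complex-algebraic in $(z,\bar w)$.

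Writing $K_G=\widetilde K_G(\zeta,\bar\eta)\,d\zeta\wedge d\bar\eta$ in canonical extrinsic coordinates at a smooth point of $V$, the displayed identity reads
\begin{equation*}
\widetilde K_G(F(z),\overline{F(w)})\,J_F(z)\,\overline{J_F(w)} \;=\; \frac{n!}{\pi^n}\sum_{\gamma\in\Gamma}\frac{\det\gamma}{(1-\langle\gamma(z),w\rangle)^{n+1}}.
\end{equation*}
Since $F$ is algebraic by assumption, so is the Jacobian $J_F$, and hence the coefficient of $(F,F)^*K_G$ is complex-algebraic on $(\mathbb{B}^n-V_1)\times(\mathbb{B}^n-V_1)$. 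Because $F$ is a covering map, at any $q\in\mathbb{B}^n-V_1$ it restricts to an algebraic biholomorphism from a neighborhood of $q$ onto a neighborhood of $p=F(q)$ in $\Reg V\cap(G-V_2)$. I would then invoke locally the equivalence (a)$\Leftrightarrow$(c) of Section \ref{Sec background}, with $F$ playing the role of the algebraic parametrization, to conclude that $K_G$ is real-algebraic on $V$ at every point of $G-V_2$ in the sense of Definition \ref{algebraic on affine variety def}.

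To promote this algebraicity to the full regular locus of $G$, I would apply Theorem \ref{Kobayashi thm}: the Bergman kernel form of $G$ coincides with that of $G-V_2$ on $(G-V_2)\times(G-V_2)$, so its coefficient is a single real-analytic function across $V_2\cap\Reg V$, and any polynomial relation holding on the nonempty open subset $G-V_2$ of a connected real-analytic neighborhood must hold everywhere by the identity principle. The step I expect to be the main obstacle is the local transfer through $F$ in the previous paragraph: it requires correctly interpreting the characterization of algebraicity on $V$ via the algebraic parametrization provided by $F$ and its local algebraic inverses, and carefully passing between complex-algebraicity of the off-diagonal kernel $\widetilde K_G(\zeta,\bar\eta)$ and real-algebraicity on the diagonal as in Remark \ref{rmk:RvsC}. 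Once this is in place, the extension across $V_2$ is routine.
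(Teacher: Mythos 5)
Your proposal is correct and follows essentially the same route as the paper: apply Theorem \ref{BK transformation thm} to $F$, use the rationality of $K_{\mathbb{B}^n}$ and of the unitary maps $\gamma\in\Gamma$ to see that $(F,F)^*K_G$ is algebraic, and then invoke the equivalence (c) of Section \ref{Sec background} together with Theorem \ref{Kobayashi thm}. The paper's proof is just a terser version of this; your extra care about the local transfer through $F$ and the extension across $V_2$ fills in details the paper leaves implicit.
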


\begin{proof}
Note that the Bergman kernel form of $G$ coincides with that of $\widetilde{G}:=G-V_2$ by Theorem \ref{Kobayashi thm}, and likewise the Bergman kernel form $K_{\mathbb{B}^n}$ of $\mathbb{B}^n$ coincides with that of $\widetilde{B}:=\mathbb{B}^n-V_1$. By the transformation law in Theorem \ref{BK transformation thm}, we have
	\begin{equation*}
		\sum_{\gamma\in\Gamma}(\id_{\mathbb{B}^n},\gamma)^*K_{\mathbb{B}^n}=(F,F)^*K_G \quad \mbox{ on } \widetilde{B}\times\widetilde{B}.
	\end{equation*}
	Since all $\gamma\in \Gamma$ and $K_{\mathbb{B}^n}$ are rational,  so is the right hand side of the equation. This implies that $K_G$ is algebraic (see the equivalent condition (c) of algebraicity in \S 2.1).
\end{proof}


Theorem \ref{main theorem backward direction} applies in particular to Cartan's canonical realization of ball quotient.

\begin{cor}\
Let $\Gamma\subset U(n)$ be a finite unitary group. Suppose $Q:\mathbb{C}^n\rightarrow \mathbb{C}^N$ is the basic map associated to $\Gamma$. Let $G=Q(\mathbb{B}^n)$, which is a relatively compact domain in the algebraic variety $V=Q(\mathbb{C}^n)$.
Then the Bergman kernel form of $G$ is algebraic.
\end{cor}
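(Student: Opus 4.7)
The plan is to apply Theorem \ref{main theorem backward direction} directly, with $F$ taken to be a suitable restriction of the basic map $Q$. Since $Q$ is a polynomial map, it is automatically algebraic, so the only real task is to identify proper analytic subvarieties $V_1\subset\mathbb{B}^n$ and $V_2\subset G$ outside of which $Q$ is an honest $m$-sheeted holomorphic covering with deck group $\Gamma$.

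To construct $V_1$, I would let
\begin{equation*}
V_1 := \mathbb{B}^n \cap \bigcup_{\gamma \in \Gamma\setminus\{\id\}} \Fix(\gamma),
\end{equation*}
the intersection of $\mathbb{B}^n$ with the finite union of fixed-point loci of non-identity elements of $\Gamma$. Each $\Fix(\gamma)$ is a proper complex linear subspace of $\mathbb{C}^n$ (since $\gamma\in U(n)$ is diagonalizable and nontrivial), so $V_1$ is a proper analytic subvariety of $\mathbb{B}^n$. I would then set $V_2 := Q(V_1)$. Because $Q\colon\mathbb{C}^n\to\mathbb{C}^N$ is proper and polynomial, Remmert's proper mapping theorem guarantees that $V_2$ is a proper analytic subvariety of $G$.

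Next I would verify the covering property. On $\mathbb{B}^n\setminus V_1$, the group $\Gamma$ acts freely, and since $Q$ generates the $\Gamma$-invariants, two points in $\mathbb{B}^n\setminus V_1$ have the same image under $Q$ if and only if they lie in the same $\Gamma$-orbit. Combined with the fact that $Q|_{\mathbb{B}^n\setminus V_1}$ is a local biholomorphism at each such free point (the Jacobian degeneration locus is contained in $V_1$), this shows that $Q$ restricts to an $m$-sheeted holomorphic covering map from $\mathbb{B}^n\setminus V_1$ onto $G\setminus V_2$, with deck transformation group exactly $\Gamma$. (In particular $G\setminus V_2$ is contained in the regular locus of $V$, since all singularities of $V=\mathbb{C}^n/\Gamma$ arise from points with nontrivial $\Gamma$-stabilizer and are therefore absorbed into $V_2$.)

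With these $V_1$, $V_2$, and $F := Q|_{\mathbb{B}^n\setminus V_1}$ in hand, the hypotheses of Theorem \ref{main theorem backward direction} are met: $F$ is algebraic because $Q$ is a polynomial map, and it is an $m$-sheeted covering with deck group $\Gamma$. Applying that theorem yields algebraicity of the Bergman kernel form of $G$. The only subtle step is checking that $V_2 = Q(V_1)$ is a proper analytic subvariety of $G$ (handled by Remmert) and that no singular points of $V$ remain in $G\setminus V_2$; everything else is formal.
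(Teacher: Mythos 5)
Your proposal is correct and follows essentially the same route as the paper: restrict the basic map $Q$ off a proper subvariety, verify it is an $m$-sheeted covering with deck group $\Gamma$, push the removed set forward by Remmert's proper mapping theorem, and invoke Theorem \ref{main theorem backward direction}. The only cosmetic difference is that you excise the fixed-point locus $\bigcup_{\gamma\neq\id}\mathrm{Fix}(\gamma)$ while the paper excises the locus $Z$ where the Jacobian of $Q$ degenerates; these sets coincide (freeness of the action is equivalent to $Q$ being a local biholomorphism onto $\Reg V$), so the two arguments are the same in substance.
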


\begin{proof}
We let
\begin{equation*}
		Z=\{z\in \mathbb{C}^n: \mbox{the Jacobian of $Q$ at $z$ is not full rank}  \}.
	\end{equation*}
Clearly, $Z$ is a proper complex analytic variety in $\mathbb{C}^n$. By Remmert's proper mapping theorem, $Q(Z)\subset V$ is a proper complex analytic variety. Moreover, $Q:\mathbb{B}^n-Z\rightarrow G-Q(Z)$ is a covering map with $m$ sheets, where $m=|\Gamma|$, and  $\Gamma$ is its deck transformation group (Note that $Q^{-1}(Q(Z))=Z$; see \cite{Cartan}). The conclusion now follows from Theorem \ref{main theorem backward direction}.
\end{proof}

\begin{rmk}
Note that the "if" implication in Theorem \ref{main theorem intro} in fact holds under a much weaker assumption than that stipulated in the theorem.  In Theorem \ref{main theorem backward direction} we do not assume $n=2$ nor that the group $\Gamma$ is fixed point free. {We remark that the formula for the Bergman kernel of the finite ball quotient is also obtained by Huang-Li \cite{HuLi}.}
\end{rmk}

\section{Proof of Theorem \ref{main theorem intro}, part II}\label{Sec Algebraic BK}
In this section, we prove one of the main results of the paper---the "only if" implication in Theorem \ref{main theorem intro}. We also prove Corollary \ref{main corollary intro} and \ref{main corollary 2 intro}. 

\begin{proof}[Proof of the "only if" implication in Theorem \ref{main theorem intro}.]
Let $V$ and $G$ be as in Theorem \ref{main theorem intro} and assume that $G$ has algebraic Bergman kernel.  We shall prove that $G$ is a finite ball quotient. We proceed in several steps.
	
	\textbf{Step 1.} In this step, we prove $\partial G$ is real analytic, and furthermore, real algebraic. For this step, we do not need to assume that the dimension of $V$ is two.
	\begin{prop}\label{boundary algebraic prop}
		Let $G$ be a relatively compact domain in an $n$-dimensional ($n\geq 2$) algebraic variety $V\subset\mathbb{C}^N$ with smooth strongly pseudoconvex boundary. If the Bergman kernel $K_G$ of $G$ is algebraic, then the boundary $\partial G$ of $G$ is Nash algebraic, i.e., $\partial G$ is locally defined by a real algebraic function.
	\end{prop}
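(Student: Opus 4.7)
The natural approach is to combine Fefferman's boundary asymptotic expansion of the Bergman kernel with the polynomial relation forced by algebraicity. Since every boundary point $p \in \partial G$ is a smooth point of $V$, I would fix canonical extrinsic coordinates $\xi = (\xi_1, \ldots, \xi_n)$ on $T_p V$ at $p$; in these coordinates $V$ is locally a graph over $T_p V$ and $G$ appears as a smoothly bounded strongly pseudoconvex domain in $\bC^n \cong \mathbb{R}^{2n}$. Fefferman's expansion \cite{Fe2} then gives, for the coefficient $\widetilde K$ of the Bergman kernel form relative to the Euclidean volume form,
\begin{equation*}
\widetilde K(\xi, \bar\xi) = \frac{\varphi(\xi, \bar\xi)}{r(\xi, \bar\xi)^{n+1}} + \psi(\xi, \bar\xi) \log \frac{1}{r(\xi, \bar\xi)},
\end{equation*}
where $r$ is a smooth defining function of $\partial G$ and $\varphi, \psi \in C^\infty(\overline G)$ with $\varphi|_{\partial G} > 0$. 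In particular $\widetilde K \to +\infty$ as $\xi \to \partial G$.

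By the equivalence (a) $\Leftrightarrow$ (b) recorded in \S 2.1, algebraicity of $K_G$ means that $\widetilde K$ is real-algebraic in these canonical extrinsic coordinates. Hence there exists a nontrivial polynomial $P(x, y) = \sum_{j=0}^d a_j(x) y^j \in \mathbb{R}[x, y]$, of degree $d \geq 1$ in $y$ with $a_d \not\equiv 0$, such that $P(x, \widetilde K(x)) = 0$ on $G$. I would then divide this identity by $\widetilde K(x)^d$; because each polynomial $a_j$ is bounded on a compact neighborhood of $p$ while $\widetilde K \to \infty$ at $\partial G$, letting $x \to \partial G$ forces $a_d|_{\partial G} = 0$. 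Therefore $\partial G$ is locally contained in the real-algebraic hypersurface $Z = \{a_d = 0\} \subset \mathbb{R}^{2n}$.

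The final step is to pass from this containment to Nash algebraicity of $\partial G$. Factoring $a_d = b_1^{k_1} \cdots b_s^{k_s}$ into irreducible real polynomial factors, the real-analytic hypersurface $\partial G$ near $p$ is connected and of top real dimension $2n-1$, so it cannot be written as a union of lower-dimensional real-analytic subvarieties; consequently some single factor $b = b_i$ must vanish identically on $\partial G$ near $p$. Since $\{b = 0\}$ is a real-algebraic set of real codimension at least one containing the smooth hypersurface $\partial G$, the two must coincide locally near $p$, so $b$ is the desired real-algebraic defining function. I expect this last dimension/irreducible-component argument to be the main technical point to carry out cleanly; the guiding intuition, however, is that a smooth real hypersurface sitting inside the zero locus of a real polynomial must locally be cut out by exactly one irreducible factor of that polynomial.
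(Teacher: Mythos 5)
Your first two steps coincide with the paper's proof: you pass to canonical extrinsic coordinates at a boundary point, use the blow-up of the Bergman kernel at the strictly pseudoconvex boundary (the paper does not even need the full Fefferman expansion for this), and divide the polynomial relation $\sum_j a_j \widetilde K^j=0$ by the top power of $\widetilde K$ to conclude that the leading coefficient $a_d$ vanishes on $\partial G$ near $p$. Up to this point the argument is the same and is correct.

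The gap is in your final step, which is precisely where the real content of the proposition lies: upgrading the merely smooth hypersurface $\partial G$ to a real-analytic, and then real-algebraic, one. First, you refer to ``the real-analytic hypersurface $\partial G$'' when arguing that a single irreducible factor of $a_d$ must vanish on it --- but real-analyticity of $\partial G$ is not yet known; it is part of what must be proved, so this is circular (and even a corrected version, via Baire category applied to the closed sets $\partial G\cap\{b_i=0\}$, only yields a factor vanishing on some open piece of $\partial G$, not near the prescribed point $p$). Second, the concluding claim that a real-algebraic hypersurface $\{b=0\}$ containing the smooth hypersurface $\partial G$ ``must coincide with it locally'' is unjustified and false in general: near a nodal point the zero set of the irreducible polynomial $y^2-x^2(x+1)$ consists of two smooth branches, each contained in but not equal to the zero set; and in any case the inclusion $\partial G\subset\{b=0\}$ does not by itself force $\partial G$ to be real-analytic or give a local algebraic defining function. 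The paper closes exactly this gap by a different device: it writes $\partial G$ near $p$ as a smooth graph $y_n=\varphi(x')$, substitutes into the polynomial identity $a_q(z',x_n+i\varphi(x'),\bar z',x_n-i\varphi(x'))=0$, and invokes Malgrange's theorem to conclude that $\varphi$ is real-analytic, hence (as a solution of a nontrivial polynomial equation) real-algebraic. Without Malgrange's theorem or an equivalent smooth-to-analytic regularity result, your final step does not go through; with it, your factorization detour becomes unnecessary.
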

	\begin{proof}
		Fix a point $p\in \partial G$. Then there exists a neighborhood $U$ of $p$ in $V$ with canonical extrinsic coordinates
$z=(z_1,\cdots, z_n)$ on $U$ (see Section 2). Write the Bergman kernel form $K_G$ of $G$ as
		\begin{equation*}
		K_G=K(z,\bar{z})dz\wedge d\oo{z} \quad \mbox{ on } U\cap G,
		\end{equation*}
		where $dz=dz_1\wedge\cdots \wedge dz_n$, $d\oo{z}=d\oo{z_1}\wedge\cdots \wedge d\oo{z_n}$ and $K(z,\bar{z})$ is a real algebraic function on $U\cap G$.

As $K$ is real algebraic, there exist real-valued polynomials $a_1(z,\bar{z}), \cdots, a_q(z,\bar{z})$ in $\mathbb{C}^n\cong\mathbb{R}^{2n}$ with $a_q\neq 0$ such that
\begin{equation}\label{eqnkz}
		a_q(z,\bar{z})K(z,\bar{z})^q+\cdots+a_1(z,\bar{z})K(z,\bar{z})+a_0(z,\bar{z})=0, \quad \mbox{on } U\cap G.
\end{equation}
Note that when  $z\rightarrow \partial G$, we have $K(z,\bar{z})\rightarrow \infty$ as $\partial G$ is strictly pseudoconvex.
We divide both sides of \eqref{eqnkz}  by $K(z,\bar{z})^q$ and let $z \rightarrow \partial G$ to obtain
\begin{equation*}
		a_q(z,\bar{z})=0, \quad \mbox{ on } U\cap \partial G.
\end{equation*}
		
		Write $z_k=x_k+iy_k$ for $1\leq k\leq n$, $z'=(z_1,\cdots, z_{n-1})$ and $x'=(x_1,y_1,\cdots, x_{n-1},y_{n-1},x_n)$. By rotation, we can assume that $\partial G$ near $p$ is locally defined by
		\begin{equation*}
		y_n=\varphi(x'),
		\end{equation*}
where $\varphi$ is a smooth function.
We then have
		\begin{equation*}
		a_q\bigl(z',x_n+i\varphi(x'),\oo{z'},x_n-i\varphi(x')\bigr)=0.
		\end{equation*}
		By Malgrange's theorem (see \cite{Ne} and references therein), $\varphi$ is real analytic and thus, since $a_q$ is a polynomial, also real algebraic. Hence, $\partial G$ is Nash algebraic.
	\end{proof}
	
	\textbf{Step 2.}  We now return to the case where $V$ is two-dimensional. We shall prove that $\partial G$ is spherical, where $G$ is as in Theorem \ref{main theorem intro}. Fix $p\in \partial G$, and a canonical extrinsic coordinates chart $(U,z)$ of $V$ at $p$, where $z=(z_1,z_2)$. We again write
	\begin{equation*}
	K_G(z,\bar{z})=K(z,\bar{z})dz\wedge d\oo{z} \quad \mbox{ on } U\cap G,
	\end{equation*}
	where 
$dz=dz_1\wedge dz_2$ and $d\oo{z}=d\oo{z_1}\wedge d\oo{z_2}$. Choose a strongly pseudoconvex domain $D\Subset U\cap G$ such that
	\begin{equation*}
	B(p,\delta)\cap D= B(p,\delta)\cap G \quad \mbox{for some small } \delta>0.
	\end{equation*}
	Here $B(p,\delta)=\{z\in U: \|z-p\|<\delta \}$ is the ball centered at $p$ with radius $\delta$ with respect to the coordinates $(U,z)$. Write $K_D$ for the Bergman kernel of $D$, which is now considered as a function. Then $K_D-K$ extends smoothly across $B(p,\delta)\cap \partial D$ (see \cite{Fe, BoSj}, see also \cite{HuLi} for a nice and detailed proof of this fact). Consequently,
	\begin{equation*}
	K_D(z,\bar{z})=K(z,\bar{z})+h(z,\bar{z})\quad \mbox{on } D,
	\end{equation*}
	where $h(z,\bar{z})$ is real analytic in $D$ and extends smoothly across $B(p,\delta)\cap \partial D$. Let $r$ be a Fefferman defining function of $D$ and express the Fefferman asymptotic expansion of $K_D$ as
	\begin{equation*}
	K_D(z,\bar{z})=\frac{\phi(z,\bar{z})}{r(z)^3}+\psi(z,\bar{z})\log r(z) \quad \mbox{ on } D,
	\end{equation*}
	where $\phi$ and $\psi$ are smooth functions on $D$ that extend smoothly across $B(p,\delta)\cap \partial D$; see \cite{Fe2}.
	Thus,
	\begin{equation}\label{eqnkghr}
	K(z,\bar{z})=\frac{\phi(z,\bar{z})-h(z,\bar{z})r(z)^3}{r(z)^3}+\psi(z,\bar{z})\log r(z) \quad \mbox{ on } D.
	\end{equation}
	As in Step 1, there exist real-valued polynomials $a_1(z,\bar{z}),\cdots, a_q(z,\bar{z})$ in $\mathbb{C}^2\cong\mathbb{R}^4$ with $a_q\neq 0$ for some $q\geq 1$, such that
	\begin{equation*}
	a_qK^q+\cdots+a_1K+a_0=0 \quad \mbox{ on } D.
	\end{equation*}
	If we substitute (\ref{eqnkghr}) into the above equation and  multiply both sides by $r^{3q}$, then
	\begin{equation}\label{vanishing order eq}
	a_q\psi^q r^{3q}(\log r)^q+\sum_{j=0}^{q-1}b_j(\log r)^j=0 \quad \mbox{ on } D,
	\end{equation}
	where all $b_j$ for $0\leq j\leq q-1$ are smooth on $D$ and extend smoothly across $B(p,\delta)\cap \partial D$. We recall the following lemma from \cite{FuWo}.
	\begin{lemma}[\cite{FuWo}]
		Let $f_0(t), \cdots, f_q(t)\in C^{\infty}(-\varepsilon,\varepsilon)$ for $\varepsilon>0$. If
		\begin{equation*}
		f_0(t)+f_1(t)\log t+\cdots+f_q(t)(\log t)^q=0
		\end{equation*}
		for all $t\in (0,\varepsilon)$, then each $f_j(t)$ for $0\leq j\leq q$, vanishes to infinite order at $0$.
	\end{lemma}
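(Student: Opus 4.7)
The plan is to reduce the lemma to an asymptotic linear independence statement for the monomials $t^k(\log t)^j$ as $t\to 0^+$. The core observation is that for any nonzero polynomial $P$ of degree at most $q$, $|P(\log t)|\to\infty$ as $t\to 0^+$, since $\log t\to-\infty$. This single fact will drive the entire argument.

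Concretely, I would fix an arbitrary integer $M\geq 0$ and Taylor-expand each $f_j$ at the origin as
\[
f_j(t)=\sum_{k=0}^{M}c_{j,k}\,t^k+r_{j,M}(t),\qquad r_{j,M}(t)=O(t^{M+1}).
\]
Substituting into the hypothesis and regrouping by powers of $t$, the identity becomes
\[
\sum_{k=0}^{M}Q_k(\log t)\,t^k \;=\; -\sum_{j=0}^{q}r_{j,M}(t)(\log t)^j \;=\; O\bigl(t^{M+1}|\log t|^q\bigr),
\]
where $Q_k(x):=\sum_{j=0}^{q}c_{j,k}x^j$ is a polynomial of degree at most $q$. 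I would then peel off the $Q_k$ one power of $t$ at a time: letting $t\to 0^+$ forces $Q_0(\log t)\to 0$, hence $Q_0\equiv 0$ by the core observation; dividing the residual identity by $t$ and taking the limit again gives $Q_1\equiv 0$; iterating up to $k=M$ yields $c_{j,k}=0$ for all $j$ and all $k\leq M$. Since $M$ is arbitrary, each $f_j$ has vanishing Taylor expansion at $0$, which is the stated infinite-order vanishing.

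The main technical point is tracking the remainder through the peeling: after dividing the displayed identity by $t^k$ with $k\leq M$, the right-hand side becomes $O(t^{M+1-k}|\log t|^q)=o(1)$, so the limit argument justifying $Q_k\equiv 0$ remains valid at every step. As an alternative, one could instead induct on $q$: the base case $q=0$ gives $f_0\equiv 0$ on $(0,\varepsilon)$, hence flat at $0$ by smoothness, and the inductive step starts by dividing the identity by $(\log t)^q$ to extract $f_q(0)=0$, then repeats the process on suitable derivatives of the identity to propagate vanishing to higher orders of $f_q$ before invoking the inductive hypothesis on the remaining sum. I would favor the Taylor expansion route, since it bypasses the somewhat delicate differentiation manipulations and exposes the true mechanism---the polynomial-versus-logarithmic growth dichotomy---most cleanly.
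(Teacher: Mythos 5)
Your argument is correct, and it is worth noting that the paper itself gives no proof of this lemma at all: it is quoted verbatim from Fu--Wong \cite{FuWo}, so there is no in-paper argument to compare against, and your write-up amounts to a self-contained replacement for that citation. Your Taylor-expansion route is sound: with $f_j(t)=\sum_{k\le M}c_{j,k}t^k+O(t^{M+1})$ the hypothesis becomes $\sum_{k=0}^{M}Q_k(\log t)\,t^k=O(t^{M+1}|\log t|^q)$ with $Q_k(x)=\sum_j c_{j,k}x^j$, and the peeling step is justified at every stage because after division by $t^k$ ($k\le M$) both the discarded terms and the remainder are $o(1)$ as $t\to 0^+$, forcing $Q_k(\log t)\to 0$ and hence $Q_k\equiv 0$; since $M$ is arbitrary this kills all Taylor coefficients of every $f_j$, which is exactly infinite-order vanishing. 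One pedantic remark: your ``core observation'' as stated ($|P(\log t)|\to\infty$ for every nonzero $P$ of degree $\le q$) fails for nonzero constants, but this does not affect the proof, since $Q_k(\log t)\to 0$ rules out a nonzero constant directly and the blow-up argument handles degree $\ge 1$. Compared with the induction-on-$q$ scheme you sketch as an alternative (divide by $(\log t)^q$, extract $f_q(0)=0$, differentiate and recurse), the expansion-and-peel argument is cleaner precisely because it avoids differentiating an identity that holds only on $(0,\varepsilon)$ and isolates the single analytic input, namely that powers of $t$ dominate powers of $\log t$; either route is acceptable, and yours is the more transparent of the two.
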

	
	It follows from the above lemma and \eqref{vanishing order eq} that the coefficient $\psi$ of of the logarithmic term vanishes to infinite order at $\partial G$ near $p$. Since $G$ is two-dimensional, it follows that $G$ is locally spherical near $p$ by \cite{Gr} (see page 129 where the result is credited to Burns) and \cite{Bou} (see page 23).

\begin{rmk}\label{rmk:Ramadanov}
Recall from the introduction that the sphericity near $p$ above follows from the affirmation of the Ramadanov Conjecture in two dimensions. This is also the only place where the fact that $G$ is two dimensional is essentially used.
\end{rmk}

	
	\textbf{Step 3.} In this step, we will prove there is an algebraic branched covering map $F: \mathbb{B}^2\rightarrow G$ with finitely many sheets. Since we have already shown that $\partial G$ is a Nash algebraic and spherical CR submanifold in $\mathbb{C}^N$, by a theorem of Huang (see Corollary 3.3 in \cite{Hu}), it follows that $\partial G$ is CR equivalent to a CR spherical space form $\partial\mathbb{B}^2/\Gamma$ with $\Gamma\subset U(n)$ a finite group with no fixed points on $\partial \mathbb{B}^2$. In particular, there is a CR covering map $f: \partial\mathbb{B}^2\rightarrow \partial G$ (see the proof of Theorem 3.1 in \cite{Hu} and also \cite{HuJi98}). By Hartogs's extension theorem, $f$ extends as a smooth map $F\colon \overline{\mathbb{B}^2} \to V$, holomorphic in $\mathbb{B}^2$ and sending $\partial \mathbb{B}^2$ onto $\partial G$. The latter implies that $F$ is moreover algebraic by X. Huang's algebraicity theorem \cite{Huang1994}. It is not difficult to see that $F$ sends $\mathbb{B}^2$ into $G$. Since $F$ maps $\partial \mathbb{B}^2$ to $\partial G$, we conclude that $F$ is a proper algebraic mapping $\mathbb{B}^2 \to G$.

\bigskip

{\bf Claim 1.}		$F\colon\mathbb{B}^2\to G$ is surjective.
	\begin{proof}[Proof of Claim 1]
		By the properness of $F$, $F(\mathbb{B}^2)$ is closed in $G$. Let us denote by
		\begin{equation*}
		Z:=\{z\in \mathbb{B}^2: F \mbox{ is not full rank at } z \}.
		\end{equation*}
Since $F$ is a local biholomorphic map at every point of $\partial \mathbb{B}^2$, $Z$ is a finite set. We also note that if $p\in \mathbb{B}^2-Z$, then $F(p)$ is a smooth point of $V$, and $F(p)$ is an interior point of $F(\mathbb{B}^2)$. Assume, in order to reach a contradiction, that $F(\mathbb{B}^2)\neq G$. Since $F(\mathbb{B}^2)$ is closed in $G$, its complement $G\setminus F(\mathbb{B}^2)$ is then a non-empty open subset of $G$. Note that any boundary point of $F(\mathbb{B}^2)$ in $G$ can only be in $F(Z)$. But $F(Z)$ is a finite set, which cannot separate the (non-empty) interior of $F(\mathbb{B}^2)$ and the (non-empty) open complement $G\setminus F(\mathbb{B}^2)$ in the domain $G$. This is the desired contradiction and, hence, $F(\mathbb{B}^2)=G$.
	\end{proof}
	
	Now,  we let $T:=F^{-1}(F(Z)) \supset Z$. Then $T$ is a compact analytic subvariety of $\mathbb{B}^2$ and thus is a finite set. Consider the restriction of $F$:
	\begin{equation*}
	F|_{\mathbb{B}^2-T}: \mathbb{B}^2-T\rightarrow G-F(Z),
	\end{equation*}
	 still denoted by $F$. Clearly, $F$ is a proper surjective map. Since $F$ is also a local biholomorphism, $F$ is a finite covering map.
	
	Note that $\mathbb{B}^2-T$ is simply connected. It follows that the deck transformation group $\widetilde{\Gamma}=\{\widetilde{\gamma}_k \}_{k=1}^m$ of the covering map $F: \mathbb{B}^2-T\rightarrow G-f(Z)$ acts transitively on each fiber. Since each $\widetilde{\gamma}_k$ is a biholomorphism from $\mathbb{B}^2-T$ to $\mathbb{B}^2-T$, it extends to an automorphism of $\mathbb{B}^2$. 
Consequently,
\begin{equation*}
\widetilde{\Gamma}=\bigl\{ \widetilde{\gamma}\in \Aut(\mathbb{B}^2): F\circ \widetilde{\gamma}=F \mbox{ on } \mathbb{B}^2\bigr\}.
\end{equation*}
Recall that $\Gamma$ is the deck transformation group of the original covering map $f:\partial\mathbb{B}^2\rightarrow \partial G$. 
From this, it is clear that we can identify $\Gamma$ with $\widetilde{\Gamma}$. From now on, we will simply use the notation $\Gamma$ for either group.

Note that $Z$ and $T$ are both closed under the action of $\Gamma$, and $(\mathbb{B}^2-T)/\Gamma$ is biholomorphic to $G-f(Z)$.
	
{\bf Claim 2:} If $z, w \in \mathbb{B}^2$ satisfy $F(z)=F(w),$ then $w=\gamma(z)$ for some $\gamma \in \Gamma.$ Consequently, $T=Z$.
	\begin{proof}[Proof of Claim 2]
We only need to prove the first assertion. If both $z, w$ are in $\mathbb{B}^2 -T,$ then the conclusion is clear as $\Gamma$ acts transitively on each fiber of the
covering map $F: \mathbb{B}^2-T\rightarrow G-f(Z)$. Next we assume one of $z$ and $w$ is in $T$. Seeking a contradiction, suppose $w \neq \gamma(z)$ for every $\gamma \in \Gamma.$ Writing $q:=F(z)=F(w),$ there are then points in distinct orbits of $\Gamma$ that are mapped to $q.$ Writing $t$ for the number of  orbits of $\Gamma$ that are mapped to $q$, we must then have $t \geq 2.$ Pick $p_1, \cdots, p_t$ from these $t$ distinct orbits of $\Gamma.$
Since $T$ is a finite set, we can choose, for each $1 \leq i \leq t,$ some disjoint neighborhoods $U_i$ of $p_i$ such that $U_i \cap T  \subseteq \{p_i\}.$ Moreover,
we can make $\gamma(U_i) \cap U_j =\emptyset$ for all $\gamma \in \Gamma$ if $i \neq j.$ Consequently, $F(U_i-\{p_i\}) \cap F(U_j-\{p_j\}) =\emptyset.$ Note there is a small open subset $W$ containing $q$ such that $W \subseteq \cup_{i=1}^t F(\cup_{\gamma \in \Gamma} \gamma(U_i))=\cup_{i=1}^t F(U_i)$. Thus $W-\{q\}\subseteq \cup_{i=1}^t F(U_i- \{p_i\}).$ But the sets $F(U_i-p_i) \cap (W-\{q\})$ are open and disjoint, and we can choose $W-\{q\}$ to be connected. This is a contradiction. Thus we must have $t=1$ and $w=\gamma(z)$ for some $\gamma \in \Gamma.$
	\end{proof}

Recall that $0$ is assumed to be the only fixed point for elements in $\Gamma$. We write $q_0:=F(0)$ and prove that $q_0$ is the only possible singularity in $G$. Also, recall that all singularities of $G$ are assumed to be isolated and normal.
	
{\bf Claim 3.} $G$ can only have a singularity at $q_0$.
	
	\begin{proof}[Proof of Claim 3]
		Suppose $q_1$ is a (normal) singular point in $G$ and $q_1\neq q_0$. Since $F$ is onto, there exists some $p_1\in \mathbb{B}^2$ such that $f(p_1)=q_1$.
		
		First, note that we can find a small neighborhood $U_0$ of $p_1$, and a small neighborhood $W$ of $q_1$ in $G$ such that
$$\text{(i)} ~U_0\cap T=\{p_1\}; \quad \text{(ii)}~F~\text{is injective on}~U_0; \quad \text{(iii)}~W \subseteq F(U_0)~\text{and}~W \cap F(Z)=\{q_1\}.$$


It is easy to see that we can make (i) and (ii) hold. It is guaranteed by Claim 2 (see its proof) that we can find $W \subseteq F(U_0)$; the second condition in (iii) is then easy to satisfy, since $F(Z)$ is a finite set. Now, we let $U:=U_0\cap F^{-1}(W)$, which is an open subset of $\mathbb{B}^2$ containing $p_1$. Then $F: U-\{p_1\}\rightarrow W-\{q_1\}$ is a biholomorphism. We let $g: W-\{q_1\}\rightarrow U-\{p_1\}$ denote its inverse. By the normality of $q_1$, 
we can assume that $g$  is the restriction of some holomorphic map $\widehat{g}$ defined on some open set $\widehat{W}\subset
		\mathbb{C}^N$, where  $\widehat{W}$ contains $W$. Since $g\circ F|_{U-\{p_1\}}$ equals the identity map, $\widehat{g}\circ F$ equals identity on $U$ by continuity. Similarly $F \circ (\widehat{g}|_W)$ equals the identity on $W$.    
Therefore, $q_1$ cannot be a singular point.
	\end{proof}
	
	By Claim 2 and Claim 3, we also see that $T=Z=\{0\} \mbox{ or } \varnothing$. Therefore, $F$ gives a holomorphic algebraic branched covering map from $\mathbb{B}^2$ to $G$ with a possible branch point at $0$. This completes the proof of the "only if" implication in Theorem \ref{main theorem intro}. 	
\end{proof}

\begin{rmk}
We reiterate (see Remark \ref{rmk:Ramadanov} above) that in the above proof, the condition that $\dim V=2$ is only used in the second step where we apply the affirmative solution of the Ramadanov conjecture in $\mathbb{C}^2$ (\cite{Gr}, \cite{Bou}).
\end{rmk}

We shall now prove Corollaries \ref{main corollary intro} and \ref{main corollary 2 intro}.
\begin{proof}[Proof of Corollary \ref{main corollary intro}]
	
By Theorem \ref{main theorem intro}, it follows that $G$ can be realized as a finite ball quotient  $\mathbb{B}^2/\Gamma$ by an algebraic map for some finite unitary group $\Gamma$ with no fixed point on $\partial\mathbb{B}^2$. We must prove that $\Gamma = \{ \id \}$. Suppose not. But, then $G$ must have a singular point (see \cite{Rudin}), which is a contradiction.
\end{proof}

\begin{proof}[Proof of Corollary \ref{main corollary 2 intro}]
The algebraic case follows immediately from Corollary \ref{main corollary intro}. Thus, we only need to consider the rational case.
First, as a consequence of the algebraic case, there exists an algebraic biholomorphic map $f: G\rightarrow \mathbb{B}^2$. It remains to establish that $f$ is in fact rational. This follows immediately from a result by Bell \cite{Bell}. For the convenience of the readers, however, we sketch an independent proof here.
Denote by $K_G$ and $K_{\mathbb{B}^2}$ the Bergman kernels (now considered as functions) of $G$ and $\mathbb{B}^2$, respectively. By the transformation law, they are related as
	\begin{align}\label{K_G transformation eq}
	\begin{split}
	K_G(z,w)
	=&\det\big(Jf(z)\bigr)\cdot K_{B}\bigl(f(z), f(w)\bigr)\cdot\oo{\det\bigl(Jf(w)\bigr)}
	\\
	=&\frac{2!}{\pi^2}\det\big(Jf(z)\bigr)\cdot\oo{\det\bigl(Jf(w)\bigr)}\cdot\frac{1}{\bigl(1-f(z)\cdot \oo{f(w)}\bigr)^3}.
	\end{split}
	\end{align}
	
We may assume $0\in G$ by translating $G$ if necessary and, by composing $f$ with an automorphism of $\mathbb{B}^2$, we may also assume $f(0)=0$. Thus, at $w=0$, we have
	\begin{equation*}
	K_G(z,0)= \frac{2!}{\pi^2}\det\big(Jf(z)\bigr)\cdot\oo{\det\bigl(Jf(0)\bigr)}.
	\end{equation*}
	It follows that
	\begin{equation}\label{det Jf eq}
	  \det\bigl(Jf(z)\bigr)=\det\bigl(Jf(0)\bigr) \frac{K_G(z,0)}{K_G(0,0)}.
	\end{equation}
	In particular, this implies that $K_G(z,0)\neq 0$ for any $z\in D$.
We evaluate \eqref{K_G transformation eq} on the diagonal $w=z$ and use \eqref{det Jf eq} to obtain
	\begin{equation*}
	K_G(z,z)
	=\frac{2!}{\pi^2}\bigl|\det\big(Jf(0)\bigr)\bigr|^2\frac{|K_G(z,0)|^2}{|K_G(0,0)|^2}\frac{1}{\bigl(1-\|f(z)\|^2\bigr)^3}.
	\end{equation*}
Taking the logarithm of both sides yields
	\begin{equation*}
	\log K_G(z,z)+3\log\bigl(1-\|f(z)\|^2\bigr)
	=\log\frac{2!}{\pi^2}+\log\bigl|\det\big(Jf(0)\bigr)\bigr|^2+ \log|K_G(z,0)|^2-\log|K_G(0,0)|^2.
	\end{equation*}
	For $j=1,2$, we apply the derivative $\frac{\partial}{\partial \oo{z_j}}$ to both sides and obtain
	\begin{align*}
	\frac{1}{K_G(z,z)}\frac{\partial K_G(z,z)}{\partial \oo{z_j}}-\frac{3}{1-\|f(z)\|^2}\sum_{i=1}^2\frac{\partial \oo{f_i(z)}}{\partial \oo{z_j}}f_i(z)
	=\frac{1}{\oo{K_G(z,0)}}\frac{\partial \oo{K_G(z,0)}}{\partial \oo{z_j}}.
	\end{align*}
Complexifying the above equation and evaluating it at $w=0$, after rearrangement, we obtain
	\begin{align*}
	\sum_{i=1}^2\frac{\partial \oo{f_i}}{\partial \oo{z_j}}(0)f_i(z)=\frac{1}{3}\left(\frac{1}{K_G(z,0)}\frac{\partial K_G}{\partial \oo{z_j}}(z,0)-\frac{1}{\oo{K_G(0,0)}}\frac{\partial \oo{K_G}}{\partial \oo{z_j}}(0,0)\right).
	\end{align*}
Note this is a linear system for $f(z)=(f_1(z),f_2(z))$ and the coefficient matrix $\oo{Jf(0)}$ is non-singular. By solving this linear system for $f$, it is immediately clear that
the rationality of $K_G$ implies that of $f$.
\end{proof}



\begin{rmk}
Corollary \ref{main corollary intro} implies, in particular, that the Burns--Shnider domains in $\bC^2$ (see page 244 in \cite{BuSh}) cannot have algebraic Bergman kernels. In fact, this holds for any Burns--Shnider domain in $\bC^n$ for $n \geq 2$, which can be seen as follows. By Proposition \ref{boundary algebraic prop}, if the Bergman kernel were algebraic, then the boundary would be Nash algebraic. While this can be seen to not be so by inspection, a contradiction would also be reached by the Huang--Ji Riemann mapping theorem \cite{HuJi02} since the boundary of a Burns--Shnider domain is spherical while the domain itself is not biholomorphic to the unit ball.
\end{rmk}

\section{Counterexample in higher dimension}\label{Sec counterexample}
In this section, we construct a $3-$dimensional reduced Stein space $G$ with only one normal singularity and compact, smooth strongly pseudoconvex boundary,  realized as a relatively compact domain in a complex algebraic variety $V$ in $\mathbb{C}^4$. We will show that its Bergman kernel is algebraic, while $G$ is not biholomorphic to any finite ball quotient $\mathbb{B}^n/\Gamma$, which shows that Theorem \ref{main theorem intro} cannot hold in higher dimensions.

Let $G$ be defined as
\begin{equation*}
	G=\bigl\{w=(w_1,w_2,w_3,w_4)\in\mathbb{C}^4: |w_1|^2+|w_2|^2+|w_3|^2+|w_4|^2<1,\quad w_1w_4=w_2w_3 \bigr\}.
\end{equation*}
Then $G$ is a relatively compact domain in the complex algebraic variety
\begin{equation}\label{variety V}
	V=\bigl\{ w\in \mathbb{C}^4: w_1w_4=w_2w_3\bigr\}.
\end{equation}
Since $G$ is a closed algebraic subvariety of $\mathbb{B}^4\subset \mathbb{C}^4$, $G$ is a reduced Stein space. Note that $0$ is the only singularity of $V$. Moreover it is a normal singularity as it is a hypersurface singularity of codimension 3 ($>2$; see \cite{sha}). It is also easy to verify that $G$ has smooth strongly pseudoconvex boundary in $V$.

\begin{prop}\label{prop:nonspherical}
	The boundary $M=\partial G$ of $G$ is homogeneous and non-spherical.
\end{prop}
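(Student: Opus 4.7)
The plan is to establish homogeneity and non-sphericity of $M=\partial G$ separately. Homogeneity will come from an explicit transitive action of $U(2)\times U(2)$ on $\partial G$, while non-sphericity will be detected by a Chern--Moser-type local CR invariant at a smooth boundary point.

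For homogeneity, I will first observe that $V\setminus\{0\}$ admits a Segre-type parametrization
\[
\phi(z,\lambda)=(z_1\lambda_1,\, z_2\lambda_1,\, z_1\lambda_2,\, z_2\lambda_2),\qquad (z,\lambda)\in (\mathbb{C}^2\setminus\{0\})\times(\mathbb{C}^2\setminus\{0\}),
\]
realizing $V\setminus\{0\}$ as a principal $\mathbb{C}^*$-bundle under the scaling $(z,\lambda)\sim(\mu z,\mu^{-1}\lambda)$. The standard action $(A,B)\cdot(z,\lambda):=(Az,B\lambda)$ of $U(2)\times U(2)$ commutes with this scaling and hence descends to an action on $V$. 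Since $|\phi(z,\lambda)|^2=|z|^2|\lambda|^2$, this action preserves the restriction of $\sum|w_i|^2$ to $V$, and thus preserves $\partial G$. To show transitivity, I will use the $\mathbb{C}^*$-scaling to normalize $|z|=|\lambda|=1$ on the preimage of $\partial G$, after which transitivity of $U(2)$ on $S^3\subset\mathbb{C}^2$ immediately gives transitivity of $U(2)\times U(2)$ on $\partial G$.

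For non-sphericity, I will work locally near the smooth point $p_0=(1,0,0,0)\in\partial G$. Since $w_1\ne 0$ there, the relation $w_4=w_2w_3/w_1$ trivializes $V$, yielding local coordinates $(w_1,w_2,w_3)$ in which $\partial G$ is defined by
\[
\rho(w)=|w_1|^2+|w_2|^2+|w_3|^2+\frac{|w_2w_3|^2}{|w_1|^2}-1=0.
\]
Translating $w_1\mapsto 1+w_1$ and solving $\rho=0$ for $\mathrm{Re}\,w_1$ puts $\partial G$ in the graph form
\[
-2\,\mathrm{Re}(w_1)=|w_2|^2+|w_3|^2+(\mathrm{Im}\,w_1)^2+|w_2|^2|w_3|^2+(\text{higher-order terms}),
\]
with the Euclidean Levi form $|w_2|^2+|w_3|^2$ on the CR tangent space. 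In the Chern--Moser normal form the first potential obstruction to sphericity is the trace-free $N_{22}$ component, and a direct calculation shows that the trace-free projection of $|w_2|^2|w_3|^2$ with respect to $|w_2|^2+|w_3|^2$ equals $\tfrac{2}{3}|w_2|^2|w_3|^2-\tfrac{1}{6}(|w_2|^4+|w_3|^4)$, which is manifestly nonzero. I will then conclude that the Chern--Moser $N_{22}$ invariant does not vanish at $p_0$, so $\partial G$ is non-spherical there, and by homogeneity it is nowhere spherical.

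The main obstacle is justifying rigorously that this trace-free $(2,2)$-coefficient is indeed the Chern--Moser obstruction, i.e., that no holomorphic change of coordinates can eliminate it at $p_0$. This ultimately rests on the uniqueness of the Chern--Moser normal form and will presumably be carried out in full detail in the appendix. A conceptually cleaner alternative would be to observe that $\partial G$ fibers as a circle bundle over $\mathbb{P}^1\times \mathbb{P}^1$ and to invoke the criterion that a Sasaki-type strictly pseudoconvex CR boundary over a K\"ahler base is spherical only if the base has constant holomorphic sectional curvature, which $\mathbb{P}^1\times \mathbb{P}^1$ with its Segre Fubini--Study metric does not.
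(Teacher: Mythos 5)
Your proposal is correct in substance but takes a genuinely different route from the paper. The paper proves the proposition by exhibiting an explicit biholomorphism of a neighborhood of $\partial G$ in $V$ onto a neighborhood of the unit circle bundle $S(L)$ of $\pi_1^*(L_0,h_0)\otimes\pi_2^*(L_0,h_0)$ over $\mathbb{CP}^1\times\mathbb{CP}^1$ (via the map $(\lambda(\zeta_1,z_1)\otimes(\zeta_2,z_2),\cdot,\cdot)\mapsto(\lambda\zeta_1\zeta_2,\lambda z_1\zeta_2,\lambda\zeta_1z_2,\lambda z_1z_2)$), and then quotes homogeneity of $S(L)$ from \cite{EnZh} and non-sphericity from Theorem 12 of \cite{Wang}; your ``conceptually cleaner alternative'' in the last sentence is essentially the paper's actual argument. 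Your primary route instead makes both halves self-contained: for homogeneity, the Segre parametrization $w=z\lambda^T$ of the cone $V$, under which $(A,B)\in U(2)\times U(2)$ acts on $\mathbb{C}^4$ by the unitary $A\otimes B$, preserving $V$ and $\sum_i|w_i|^2$ and acting transitively on $\partial G$ --- this is correct; for non-sphericity, a Chern--Moser computation at $p_0=(1,0,0,0)$, and your arithmetic checks out: in the chart $w_4=w_2w_3/w_1$ the only weight-$4$ term of bidegree $(2,2)$ in the CR variables $(w_2,w_3)$ is $|w_2|^2|w_3|^2$, whose trace-free part relative to the Levi form $|w_2|^2+|w_3|^2$ is indeed $\tfrac23|w_2|^2|w_3|^2-\tfrac16\bigl(|w_2|^4+|w_3|^4\bigr)\neq0$. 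The step you defer --- that this raw trace-free coefficient really is the Chern--Moser obstruction --- does need to be written out, but it goes through here for concrete reasons you should state: your expansion contains no weight-$3$ terms, so the normalizations killing lower-weight terms do not feed into the $(2,2)$ component; the terms depending only on $w_1$ (the $(\operatorname{Im}w_1)^2$ and $(\operatorname{Re}w_1)^2$ pieces) are removed by changes in the $w_1$-variable alone, which alter the weight-$4$ part only by pure traces (multiples of $(|w_2|^2+|w_3|^2)^2$ and $(|w_2|^2+|w_3|^2)$ times $(1,1)$-terms); and the residual linear isotropy acts on the trace-free $(2,2)$ tensor by Levi-isometries and scaling, so it cannot annihilate a nonzero value. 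Finally, since the CR dimension is $2$ (hypersurface in a $3$-dimensional manifold), vanishing of this tensor is exactly the sphericity criterion, so one nonzero value at $p_0$ plus homogeneity gives non-sphericity everywhere, as you say. The trade-off is clear: the paper's proof is a few lines modulo the cited results in \cite{EnZh} and \cite{Wang}, while yours is elementary and self-contained (and produces an explicit transitive group) at the cost of the Chern--Moser bookkeeping you postpone to an appendix.
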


\begin{proof}
	Consider the product complex manifold $\mathbb{CP}^1\times \mathbb{CP}^1$. For $j=1,2$, let $\pi_j: \mathbb{CP}^1\times \mathbb{CP}^1\rightarrow \mathbb{CP}^1$ be the projection map to the $j$-th component and let $(L_0,h_0)\rightarrow \mathbb{CP}^1$ be the tautological line bundle $L_0$ with its standard Hermitian metric $h_0$. We set the Hermitian line bundle $(L,h)$ over $\mathbb{CP}^1\times \mathbb{CP}^1$ to be:
	\begin{equation*}
		(L,h):=\pi_1^*(L_0, h_0)\otimes \pi_2^*(L_0, h_0). 
	\end{equation*}
	We begin the proof with the following claim.
	
{\bf Claim 1.} Let $(L,h)\rightarrow \mathbb{CP}^1\times\mathbb{CP}^1$ be as above and let $S(L)\rightarrow \mathbb{CP}^1\times\mathbb{CP}^1$ be its unit circle bundle. Then
		$M$ is CR diffeomorphic to $S(L)$ by the restriction of biholomorphic map.
\begin{proof}[Proof of Claim 1]
	Note that the circle bundle $S(L)\rightarrow\mathbb{CP}^1\times \mathbb{CP}^1$ can be written as
	\begin{equation}\label{circle bundle of CP1 times CP1}
		S(L)=\left\{\Bigl(\lambda(\zeta_1,z_1)\otimes(\zeta_2,z_2), [\zeta_1,z_1],[\zeta_2,z_2]\Bigr): \begin{array}{l}
		[\zeta_1,z_1]\in \mathbb{CP}^1, \quad [\zeta_2,z_2]\in \mathbb{CP}^1\\ |\lambda|^2(|\zeta_1|^2+|z_1|^2)(|\zeta_2|^2+|z_2|^2)=1
		\end{array}   \right\}.
	\end{equation}
Define $F: L\rightarrow \mathbb{C}^4$ as
	\begin{equation}\label{eqnltom}
		F\Bigl(\lambda(\zeta_1,z_1)\otimes(\zeta_2,z_2), [\zeta_1,z_1],[\zeta_2,z_2]\Bigr)=\Bigl( \lambda\zeta_1\zeta_2, \lambda z_1\zeta_2,\lambda\zeta_1z_2,\lambda z_1z_2\Bigr).
	\end{equation}
	Then the map $F$ gives a biholomorphism that sends a neighborhood of $S(L)$ in $L$ to a neighborhood of $M$ in $V\subset \bC^4$. This proves the claim.
\end{proof}

Note that $S(L)$ is homogeneous (see \cite{EnZh}) and non-spherical by Theorem 12 in \cite{Wang}. Thus, $M$ is homogeneous and non-spherical.
\end{proof}

\begin{prop}
	The Bergman kernel form $K_G$ of $G$ is algebraic.
\end{prop}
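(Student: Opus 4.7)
The plan is to compute $K_G$ explicitly by transporting the problem, via the biholomorphism $F$ of \eqref{eqnltom} constructed in Proposition~\ref{prop:nonspherical}, to a Hartogs-type domain in $\mathbb{C}^3$, and then exhibiting a closed-form rational expression.

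Step one is the reduction. By the claim in the proof of Proposition~\ref{prop:nonspherical}, $F$ restricts to a biholomorphism between $D(L)\setminus(\text{zero section})$ and $G\setminus\{0\}$, where $D(L)\subset L$ is the unit disk subbundle. Since the zero section of $D(L)$ and the point $\{0\}\subset G$ are analytic subvarieties of complex dimension at most $2$ in their respective $3$-dimensional ambient spaces, Theorem~\ref{Kobayashi thm} gives $K_{D(L)}=K_{D(L)\setminus(\text{zero section})}$ and $K_G=K_{G\setminus\{0\}}$; biholomorphic invariance (the case $m=1$ of Theorem~\ref{BK transformation thm}) then yields $(F,F)^*K_G=K_{D(L)}$. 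Restricting further to the principal chart $\{\zeta_1\zeta_2\neq 0\}\subset \mathbb{CP}^1\times\mathbb{CP}^1$ identifies $L$ with $\mathbb{C}^3$ in coordinates $(\lambda,z_1,z_2)$ and $D(L)$ with the Hartogs-type domain
\[
U=\{(\lambda,z_1,z_2)\in\mathbb{C}^3:|\lambda|^2(1+|z_1|^2)(1+|z_2|^2)<1\};
\]
its complement in $D(L)$ is still of complex dimension at most $2$, so $K_{D(L)}=K_U$ on $U$ by Theorem~\ref{Kobayashi thm} once more.

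Step two, the main computation, is to obtain an explicit formula for $K_U$. The $S^1$-symmetry $\lambda\mapsto e^{i\theta}\lambda$ decomposes a holomorphic $(3,0)$-form on $U$ as $\bigl(\sum_{k\geq 0}\lambda^k g_k(z_1,z_2)\bigr)d\lambda\wedge dz_1\wedge dz_2$ with $L^2$-orthogonal modes. Integrating out $\lambda$ over $|\lambda|^2<1/h$, where $h=(1+|z_1|^2)(1+|z_2|^2)$, reduces the $L^2$ condition for the $k$-th mode to weighted integrability of $g_k$ on $\mathbb{C}^2$ against $h^{-(k+1)}$; the polynomial decay of this weight forces $g_k$ to be a polynomial of bidegree at most $(k-1,k-1)$, and to vanish for $k=0$. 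Beta-function computation of $\|z_1^iz_2^j\|^2$ and the summation identity $\sum_{m\geq 0}(m+2)(m+1)^2\tau^m=2(1+2\tau)/(1-\tau)^4$ collapse the resulting triple series for $K_U$ to the rational closed form
\[
K_U=\frac{2|\lambda|^2(1+2T)}{\pi^3(1-T)^4},\qquad T=|\lambda|^2(1+|z_1|^2)(1+|z_2|^2),
\]
written as the coefficient of $d\lambda\wedge dz_1\wedge dz_2\wedge d\bar\lambda\wedge d\bar z_1\wedge d\bar z_2$.

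The proof concludes by pulling back along $F$. Under the substitution $\lambda=w_1$, $z_i=w_{i+1}/w_1$, the identity $(|w_1|^2+|w_2|^2)(|w_1|^2+|w_3|^2)=|w_1|^2\|w\|^2$ on $V$ (which is immediate from $|w_2w_3|=|w_1w_4|$) yields $T=\|w\|^2$; combined with the Jacobian $|\lambda|^4=|w_1|^4$ of $F$ in the coordinates $(w_1,w_2,w_3)$, one obtains, on the canonical extrinsic chart $G\cap\{w_1\neq 0\}$,
\[
K_G=\frac{2(1+2\|w\|^2)}{\pi^3|w_1|^2(1-\|w\|^2)^4}\, dw_1\wedge dw_2\wedge dw_3\wedge d\bar w_1\wedge d\bar w_2\wedge d\bar w_3,
\]
with analogous rational expressions (obtained by symmetry, $|w_1|^2$ replaced by $|w_i|^2$) on the charts $\{w_i\neq 0\}$ for $i=2,3,4$. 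Hence $K_G$ is algebraic—indeed rational—in the sense of Definition~\ref{algebraic on affine variety def}. The main obstacle is the closed-form summation of the Bergman series defining $K_U$; everything else is a routine application of the transformation and extension machinery from Sections~\ref{Sec background} and~\ref{Sec transformation law}.
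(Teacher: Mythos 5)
Your argument is correct and follows essentially the same route as the paper: reduce via the algebraic biholomorphism onto the Hartogs-type domain $\Omega=\{|\lambda|^2(1+|z_1|^2)(1+|z_2|^2)<1\}$ (you pass through the disk bundle $D(L)$, the paper uses the explicit map $(\lambda,z_1,z_2)\mapsto(\lambda,\lambda z_1,\lambda z_2,\lambda z_1z_2)$ directly), compute $K_\Omega$ in closed rational form by the $S^1$-mode decomposition into weighted Bergman spaces on $\mathbb{C}^2$ with the same summation identity, and transport back using Theorem \ref{Kobayashi thm} and the transformation law. Your closed form agrees with the paper's Theorem \ref{thm:OmegaBergman} up to the harmless factor $2^3$ coming from using Lebesgue measure rather than the form normalization $i^{n^2}\varphi\wedge\oo{\varphi}$, which does not affect the algebraicity conclusion.
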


\begin{proof}
	Set
	\begin{equation}\label{domain Omega}
		\Omega:=\bigl\{ (\lambda, z)=(\lambda,z_1,z_2)\in \mathbb{C}^3: |\lambda|^2(1+|z_1|^2)(1+|z_2|^2)<1 \bigr\}.
	\end{equation}
	Note that $\Omega$ is an unbounded domain with smooth boundary in $\mathbb{C}^3$. Moreover, $\Omega$ has a rational Bergman kernel form $K_{\Omega}$ (see Appendix \ref{Sec Appendix} for a proof of this fact). Define the map $F: \mathbb{C}^3\rightarrow \mathbb{C}^4$ as
	\begin{equation*}
		F(\lambda,z_1,z_2):=(\lambda,\lambda z_1, \lambda z_2, \lambda z_1z_2),
	\end{equation*}
 We note that $F(\mathbb{C}^3)$ is contained in $V$ as defined by \eqref{variety V}. And $F$ is a holomorphic embedding on $\mathbb{C}^3-\{\lambda=0\}$. Moreover, $F(\Omega)\subset G$ and
	\begin{equation*}
		F: \widetilde{\Omega}:=\Omega-\{\lambda=0\}\rightarrow \widetilde{G}:=G-\{w_1=0\}
	\end{equation*}
	is a biholomorphism. By Theorem \ref{Kobayashi thm}, the Bergman kernel form $K_{\widetilde{\Omega}}$ of $\widetilde{\Omega}$ is the restriction (pullback) of $K_{\Omega}$ to $\widetilde{\Omega}$. Thus, $K_{\widetilde{\Omega}}$ is rational. By the transformation law \eqref{BK transformation eq}, we have
	\begin{equation*}
		K_{\widetilde{\Omega}}=(F,F)^*K_{\widetilde{G}}.
	\end{equation*}
This implies that $K_{\widetilde{G}}$ is algebraic (see the equivalent condition (c) in \S 2.1), and thus $K_G$ is also algebraic by Theorem \ref{Kobayashi thm}.
\end{proof}

Before we prove $G$ is not biholomorphic to any finite ball quotient, we pause to study the following bounded domain $U$ in $\mathbb{C}^3$:
\begin{equation*}
	U:=\Bigl\{ (w_1,w_2,w_3)\in \mathbb{C}^3: |w_1|^4+|w_1|^2(|w_2|^2+|w_3|^2)+|w_2w_3|^2<|w_1|^2   \Bigr\}.
\end{equation*}

\begin{prop}\label{propn6}
The domain $U$ has algebraic Bergman kernel and its boundary is non-spherical at every smooth boundary point.
\end{prop}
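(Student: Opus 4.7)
The plan is to identify $U$ with a dense open subset of the domain $G$ from Section \ref{Sec counterexample} via an algebraic biholomorphism, and then transfer both the algebraicity of the Bergman kernel (the preceding proposition) and the non-sphericity of the boundary (Proposition \ref{prop:nonspherical}) to $U$. More precisely, I would consider the projection $\pi\colon V\to\mathbb{C}^3$, $\pi(w_1,w_2,w_3,w_4)=(w_1,w_2,w_3)$. On $V\cap\{w_1\neq 0\}$ the defining relation $w_1w_4=w_2w_3$ forces $w_4=w_2w_3/w_1$, so $\pi$ restricts to a biholomorphism $V\cap\{w_1\neq 0\}\to \mathbb{C}^3\cap\{w_1\neq 0\}$ with algebraic inverse $(w_1,w_2,w_3)\mapsto(w_1,w_2,w_3,w_2w_3/w_1)$. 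Substituting $w_4=w_2w_3/w_1$ into $|w_1|^2+|w_2|^2+|w_3|^2+|w_4|^2<1$ and multiplying by $|w_1|^2$ identifies $\pi(G\cap\{w_1\neq 0\})$ exactly with $U$.

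For the algebraicity of $K_U$: since $G\cap\{w_1=0\}$ is a proper analytic subvariety of $G$, Theorem \ref{Kobayashi thm} gives $K_G=K_{G\cap\{w_1\neq 0\}}$ on the complement. Under the biholomorphism $\pi$, Bergman kernel forms transform by pullback (the $m=1$ case of Theorem \ref{BK transformation thm}), so $K_U=(\pi^{-1},\pi^{-1})^*K_G$. Since $K_G$ is algebraic by the previous proposition and $\pi^{-1}$ is a rational map, $K_U$ is algebraic.

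For the non-sphericity at smooth boundary points, I would compute the derivatives of the defining function $\rho(w)=|w_1|^4+|w_1|^2(|w_2|^2+|w_3|^2)+|w_2w_3|^2-|w_1|^2$:
\[
\frac{\partial\rho}{\partial w_1}=\overline{w_1}\bigl(2|w_1|^2+|w_2|^2+|w_3|^2-1\bigr), \quad \frac{\partial\rho}{\partial w_2}=\overline{w_2}\bigl(|w_1|^2+|w_3|^2\bigr), \quad \frac{\partial\rho}{\partial w_3}=\overline{w_3}\bigl(|w_1|^2+|w_2|^2\bigr).
\]
At any boundary point with $w_1=0$, the condition $\rho=0$ forces $w_2w_3=0$, and inspection then shows all three partial derivatives vanish; hence such points are not smooth. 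Therefore every smooth point of $\partial U$ lies in $\{w_1\neq 0\}$ and corresponds via $\pi$ to a smooth point of $\partial G$. Since sphericity is a local biholomorphic CR invariant and $\partial G$ is nowhere spherical by Proposition \ref{prop:nonspherical}, $\partial U$ is non-spherical at every smooth boundary point.

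The main obstacle is the boundary analysis in the last step: I must carefully verify that the boundary locus $\partial U\cap\{w_1=0\}$ contains no smooth points, since the biholomorphism $\pi$ only directly controls the portion of $\partial U$ lying above $\{w_1\neq 0\}$. The algebraicity of $K_U$ is by contrast essentially automatic once the biholomorphic identification with $G\cap\{w_1\neq 0\}$ is established.
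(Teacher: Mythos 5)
Your proposal follows the paper's own route almost verbatim: identify $U$ with $G-\{w_1=0\}$ via the algebraic biholomorphism $\pi$ (with rational inverse $(w_1,w_2,w_3)\mapsto(w_1,w_2,w_3,w_2w_3/w_1)$), transfer algebraicity of the kernel using Theorem \ref{Kobayashi thm} together with the pullback transformation rule, and transfer non-sphericity from $\partial G$ (Proposition \ref{prop:nonspherical}) at the boundary points with $w_1\neq 0$, after arguing that $\partial U\cap\{w_1=0\}$ contains no smooth points. The Bergman-kernel half and the reduction of the sphericity question to $\{w_1\neq0\}$ are correct and are exactly what the paper does.

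The one place where your argument is not yet a proof is the inference ``all three partials of $\rho$ vanish at boundary points with $w_1=0$, hence such points are not smooth boundary points.'' Vanishing of the gradient of one particular defining polynomial does not rule out smoothness of the boundary (for instance $\{(\Imaginary w_1)^3<0\}$ has smooth boundary although the gradient of the cubic defining function vanishes there); it only shows that $\rho$ is a degenerate defining function at those points, which is consistent with, but does not imply, non-smoothness. The conclusion itself is true and can be verified directly: near a point $(0,a,0)$ with $0<|a|\leq 1$ (and symmetrically $(0,0,a)$, and at the origin), $\partial U$ contains the $4$-dimensional piece $\{w_1=w_3=0\}$ together with the points $w_1\neq0$, $|w_3|^2(|w_1|^2+|w_2|^2)=|w_1|^2(1-|w_1|^2-|w_2|^2)$, so the tangent cone of $\partial U$ at $(0,a,0)$ contains all directions $(\zeta_1,0,\zeta_3)$ with $|\zeta_3|=c|\zeta_1|$ (all phases) as well as all $(0,\zeta_2,0)$; these span all of $\mathbb{R}^6$, which is impossible for a $C^1$ real hypersurface, whose tangent cone lies in a $5$-dimensional hyperplane. (The paper simply asserts this non-smoothness as ``easy to verify''; with a supplement along the above lines your write-up is complete and matches the published proof.)
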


\begin{proof}
Let $\pi: \mathbb{C}^4\rightarrow \mathbb{C}^3$ be the projection map defined by
\begin{equation*}
	\pi(w_1,w_2,w_3,w_4):=(w_1,w_2,w_3).
\end{equation*}
Let $\oo{G}$ be the closure of $G$ in $\mathbb{C}^4$. Then the image of $\oo{G}$ under the projection $\pi$ is
\begin{align*}
\widehat{U}:=\pi(\oo{G})=&\bigl\{ (w_1,w_2,w_3)\in \mathbb{C}^3: |w_1|^4+|w_1|^2(|w_2|^2+|w_3|^2)+|w_2w_3|^2 \leq |w_1|^2, w_1\neq 0 \bigr\}
	\\
	&\quad\cup  \bigl\{ (0,w_2,w_3)\in \mathbb{C}^3: |w_2|^2+|w_3|^2\leq 1, w_2w_3=0  \bigr\}
	\\
	=&\bigl\{ (w_1,w_2,w_3)\in \mathbb{C}^3: |w_1|^4+|w_1|^2(|w_2|^2+|w_3|^2)+|w_2w_3|^2 \leq |w_1|^2, |w_2|^2+|w_3|^2\leq 1 \bigr\}.
\end{align*}

Note that $\widehat{U}^{\mathrm{o}}=U \quad \mbox{and}\quad \widehat{U}=\oo{U}$,
where $\widehat{U}^{\mathrm{o}}$ denotes the interior of $\widehat{U}$ . 
But $U \neq \pi(G)$. On the other hand if we remove the variety $\{w_1=0\}$, then the projection map
\begin{equation*}
	\pi: G-\{w_1=0\}\rightarrow U
\end{equation*}
is an algebraic biholomorphism. Consequently, by Theorem \ref{Kobayashi thm} the Bergman kernel form $K_U$ of $U$ is algebraic. This proves the first part of the proposition.

To prove the second part of the proposition (i.e., the non-sphericity), we observe that the boundary $\partial U$ of $U$ is given by
\begin{align*}
	\partial U
	=&\bigl\{ (w_1,w_2,w_3)\in \mathbb{C}^3: |w_1|^4+|w_1|^2(|w_2|^2+|w_3|^2)+|w_2w_3|^2 = |w_1|^2, w_1\neq 0 \bigr\}
	\\
	&\quad\cup  \bigl\{ (0,w_2,w_3)\in \mathbb{C}^3: |w_2|^2+|w_3|^2\leq 1, w_2w_3=0  \bigr\}
	\\
	=&\bigl\{ (w_1,w_2,w_3)\in \mathbb{C}^3: |w_1|^4+|w_1|^2(|w_2|^2+|w_3|^2)+|w_2w_3|^2 = |w_1|^2, |w_2|^2+|w_3|^2\leq 1 \bigr\}.
\end{align*}
Write
\begin{equation*}
	\partial U=\bigl( \partial U \cap \{w_1\neq 0 \} \bigr)\cup \bigl( \partial U \cap \{w_1 = 0 \} \bigr).
\end{equation*}
Since the projection map $\pi$ is a biholomorphism from $\oo{G}-\{w_1=0\}$ to $\widehat{U}-\{w_1=0\}$, every point $p\in \partial U \cap \{w_1\neq 0\}$ is a smooth point of $\partial U$, and, moreover, $\partial U$ is strictly pseudoconvex and non-spherical at $p$. We note that a defining function for $\partial U$ near $p$ is given by
\begin{equation*}
	\rho=|w_1|^4+|w_1|^2(|w_2|^2+|w_3|^2)+|w_2w_3|^2-|w_1|^2.
\end{equation*}
Furthermore, it is easy to verify that every other point $q \in \partial U \cap \{w_1 = 0 \}$ is not a smooth boundary point of $U$. This proves the second part of the assertion.
\end{proof}


We are now ready to show that $G$ is indeed a counterexample to the conclusion of Theorem \ref{main theorem intro} in three dimensions.

\begin{prop}
	$G$ is not biholomorphic to any finite ball quotient.
\end{prop}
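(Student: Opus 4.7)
My plan is to argue by contradiction: assume $G$ is biholomorphic to a finite ball quotient and then deduce that $\partial G$ must be spherical, contradicting Proposition \ref{prop:nonspherical}.

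Suppose $\phi \colon \mathbb{B}^n/\Gamma \to G$ is a biholomorphism of reduced Stein spaces, where $\Gamma \subset U(n)$ is a finite unitary group with no fixed points on $\partial \mathbb{B}^n$. Since $\dim_{\mathbb{C}} G = 3$, we must have $n = 3$. The fixed-point-freeness of $\Gamma$ on $\partial \mathbb{B}^3$ implies that the quotient map $\pi \colon \mathbb{B}^3 \to \mathbb{B}^3/\Gamma$ is a local biholomorphism in some (group-invariant) neighborhood of $\partial \mathbb{B}^3$. Consequently, near any boundary point the space $\mathbb{B}^3/\Gamma$ is a smooth complex manifold with smooth strongly pseudoconvex boundary locally CR-equivalent to $\partial \mathbb{B}^3 = S^5$; in particular, all possible singularities of $\mathbb{B}^3/\Gamma$ (and of $G$) lie away from the boundary.

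To transport this sphericity to $\partial G$, I would argue locally. Given $p \in \partial G$, pick $\tilde q \in \partial \mathbb{B}^3$ with $\pi(\tilde q) = \phi^{-1}(p)$ and a small neighborhood $\widetilde{W}$ of $\tilde q$ in $\overline{\mathbb{B}^3}$ on which $\pi$ is injective. Then the composition $\Phi := (\pi|_{\widetilde{W}})^{-1} \circ \phi^{-1}$ is a biholomorphism from a neighborhood of $p$ in $\overline{G}$ onto an open subset of $\overline{\mathbb{B}^3}$ meeting $\partial \mathbb{B}^3$. By Fefferman's boundary regularity theorem (applied in local ambient coordinates on $V$ near $p$, exhibiting both sides as smoothly bounded strongly pseudoconvex domains in a complex manifold), $\Phi$ extends smoothly to the boundary and restricts to a CR diffeomorphism between an open piece of $\partial G$ near $p$ and an open piece of $S^5$. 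Hence $\partial G$ is spherical at $p$. Since $p \in \partial G$ was arbitrary, $\partial G$ is spherical everywhere, which directly contradicts Proposition \ref{prop:nonspherical}.

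The essential difficulty of the proposition has already been overcome in the proof of Proposition \ref{prop:nonspherical}, where non-sphericity of $\partial G$ was established via its identification (through the map $F$ of \eqref{eqnltom}) with the unit circle bundle $S(L)$ over $\mathbb{CP}^1 \times \mathbb{CP}^1$, which is non-spherical by Wang's classification. Given that input, the remaining argument is formal, the only technical point being the standard Fefferman-type boundary extension, applied locally to circumvent the fact that the global biholomorphism $\phi$ is between singular analytic spaces rather than smooth domains in $\mathbb{C}^n$.
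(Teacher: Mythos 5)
Your argument has a genuine gap at its central step: you evaluate $\phi^{-1}$ at the boundary point $p\in\partial G$ (choosing $\tilde q\in\partial\mathbb{B}^3$ with $\pi(\tilde q)=\phi^{-1}(p)$) and you assume that a one-sided neighborhood of $p$ in $G$ is carried by $\phi^{-1}$ into a small neighborhood of a single boundary point of $\mathbb{B}^3/\Gamma$, so that $(\pi|_{\widetilde W})^{-1}\circ\phi^{-1}$ is defined. But $\phi$ is only a biholomorphism of the open spaces; no continuous extension to the boundary, and no localization of the cluster set of $\phi^{-1}$ at $p$, is given. Fefferman's theorem does not supply this: it applies to a global biholomorphism between smoothly bounded strictly pseudoconvex domains in $\mathbb{C}^n$, while here the map is between singular Stein spaces, and the local Fefferman-type extension theorems you would want to invoke require exactly the boundary control (continuity up to a strictly pseudoconvex boundary piece, or properness onto a domain with the cluster set known to lie in the smooth strictly pseudoconvex part of the target boundary) that has not been established. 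So the step ``$\Phi$ extends smoothly and restricts to a CR diffeomorphism'' is circular: the boundary correspondence is used to define $\Phi$ before any extension theorem is applicable. This is precisely where the paper's proof does its work: it composes the biholomorphism with the basic map $Q$ and the projection $\pi$ to obtain a \emph{proper} holomorphic map $f=\pi\circ F\circ Q:\mathbb{B}^3-W\to U$ into the bounded domain $U\subset\mathbb{C}^3$ of Proposition \ref{propn6}, then proves (Claim 2, via properness and an analytic-disk argument, since a priori all boundary cluster values could lie in the degenerate set $\partial U\cap\{w_1=0\}$) that some sequence tending to a point of $\partial\mathbb{B}^3-\overline W$ has image converging to a smooth strictly pseudoconvex point $p^*\in\partial U$, and only then applies boundary extension results for proper maps (H\"older-$\frac12$ continuity from \cite{FR}, holomorphic extension across real-analytic boundaries from \cite{PT}) to conclude that $\partial U$ would be spherical near $p^*$, contradicting Proposition \ref{propn6}.

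A secondary issue: you assume from the outset that $\Gamma$ has no fixed points on $\partial\mathbb{B}^3$, so that the quotient has smooth spherical boundary. The proposition asserts that $G$ is not biholomorphic to \emph{any} finite ball quotient, and the paper's proof makes no fixed-point-free assumption (it works with the basic map $Q$ of an arbitrary finite unitary $\Gamma$); with fixed points on the sphere your local model of $\partial(\mathbb{B}^3/\Gamma)$ as a smooth spherical hypersurface breaks down, so even after repairing the extension step your argument would only rule out the fixed-point-free quotients.
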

\begin{proof}
Seeking a contradiction, we suppose $G$ is biholomorphic to a finite ball quotient $\mathbb{B}^3/\Gamma$, where $\Gamma \subset U(n)$ is a finite unitary group. We realize $\mathbb{B}^3/\Gamma$ as the image $G_0\subset \mathbb{C}^N$ of $\mathbb{B}^3$ under the basic map $Q$ associated to $\Gamma$, where $Q=(p_1,\cdots, p_N): \mathbb{C}^3\rightarrow \mathbb{C}^N$ gives a proper map from $\mathbb{B}^3$ to $G_0.$
Let $F$ be a biholomorphism from $G_0\cong \mathbb{B}^3/\Gamma$ to $G$. Then there is an analytic variety $W_0\subset G_0$ such that
	\begin{equation*}
		F: G_0-W_0\rightarrow G-\{w_1=0\} \mbox{ is a biholomorphism}.
	\end{equation*}
There also exists an analytic variety $W$ such that $W=Q^{-1}(W_0)$ and thus $Q: \mathbb{B}^3-W\rightarrow G_0-W_0$ is proper and onto.
Set
	\begin{equation*}
		f:=\pi\circ F\circ Q : \mathbb{B}^3-W \rightarrow U=\pi(G-\{w_1=0\}),
	\end{equation*}
where $\pi$ is the projection defined in the proof of Proposition \ref{propn6} and is a biholomorphism from $G-\{w_1=0\}$ to $U$. Note that $f$ is proper. Since $U\subset \mathbb{C}^3$, we can write $f$ as $(f_1,f_2,f_3)$.
	
{\bf Claim 2.} There is a sequence $\{\zeta_i\}\subset \mathbb{B}^3-W$ with $\zeta_i\rightarrow \zeta^*\in \partial \mathbb{B}^3-\oo{W}$ such that
		\begin{equation*}
			f(\zeta_i) \rightarrow p^*\in \partial U \cap \{w_1\neq 0\}.
		\end{equation*}

\begin{proof}[Proof of Claim 2]
	Suppose not. Then for any $\{\zeta_i\}\subset \mathbb{B}^n-W$ with $\zeta_i\rightarrow \zeta^* \in \partial \mathbb{B}^n-\oo{W}$, every convergent subsequence of $f(\zeta_i)$ converges to some point in $\partial U\cap \{w_1=0\}$. That is to say, if $f(\zeta_{i_k})$ is convergent, then $f_1(\zeta_{i_k})\rightarrow 0$. Note that $U$ is bounded. Thus, $f_1(\zeta_i)\rightarrow 0$ for any $\{\zeta_i\}\subset \mathbb{B}^3-W$ with $\zeta_i\rightarrow\zeta^*\in \partial\mathbb{B}^3-\oo{W}$. By a standard argument using analytic disks attached to $\partial\mathbb{B}^3-\oo{W}$, we see that
	\begin{equation*}
		f_1=0 \quad \mbox{ on } \mathbb{B}^3-W.
	\end{equation*}
	This is a contradiction.
\end{proof}

Let $\zeta_i, \zeta^*$	and $p^*$ be as in Claim 2. Note that $\zeta^*$ is a smooth strictly pseudoconvex boundary point of $\mathbb{B}^3-W$, and $p^*$ is a smooth strictly pseudoconvex boundary point of $U$ (see the proof of Proposition \ref{propn6}).
It follows from \cite{FR} (see page 239) that $f$ extends to a H\"older-$\frac{1}{2}$ continuous map on a neighborhood of $\zeta^*$ in $\oo{\mathbb{B}^3}$.  Since $f$ is proper $\mathbb{B}^3-W\to U$, its extension to the boundary is a (H\"older-$\frac{1}{2}$) continuous, nonconstant CR map sending a piece of $\partial \mathbb{B}^3$ containing $\zeta^*$ to a piece of $\partial U$ containing $p^*$.
By \cite{PT}, $f$ extends holomorphically to a neighborhood of $\zeta^*$, since both boundaries are real analytic (in fact, real-algebraic). Now, since $f$ is non-constant and sends a strongly pseudoconvex hypersurface to another, it must be a CR diffeomorphism, which would mean that $\partial U$ is locally spherical near $p^*$. This contradicts Proposition \ref{prop:nonspherical}.	
\end{proof}

We conclude this section by a couple of remarks.

\begin{rmk}
Since the Bergman kernel forms of $G$ and $U$ are algebraic, it follows from the proof of Theorem \ref{main theorem intro} in Section 5 (see Step 2) that the coefficients of the logarithmic term in Fefferman's expansions of $K_G$ and $K_V$ both vanish to infinite order at every smooth boundary point. The reduced normal Stein space $G$ gives the counterexample mentioned in Remark \ref{rmk counterexample intro}. The domain $U\subset \bC^3$ establishes the following fact, which implies that the Ramadanov conjecture fails for non-smooth domains in higher dimension.

{\it{There exists a bounded domain in $\mathbb{C}^3$ with smooth, real-algebraic boundary away from a 1-dimensional {complex} curve such that every smooth boundary point is strongly pseudoconvex and non-spherical, while the coefficient of the logarithmic term in Fefferman's asymptotic expansion of the Bergman kernel vanishes to infinite order at every smooth boundary point.}}
\end{rmk}

\begin{rmk}
	Using the same idea as in the above example, we can actually construct significantly more general examples of higher dimensional domains in affine algebraic varieties $V\subset \mathbb{C}^N$ with similar properties. Indeed, let $X$ be a compact Hermitian symmetric space of rank at least $2$. Write
	\begin{equation*}
		X=X_1\times \cdots \times X_t, \quad t\geq 1,
	\end{equation*}
	where $X_1, \cdots, X_t$ are the irreducible factors of $X$. Fix a \k-Einstein metric $\omega_j$ on $X_j$ and let $(\widehat{L}_j, \widehat{h}_j)$ be the top exterior product $\Lambda^nT^{1,0}$ of the holomorphic tangent bundle over $X_j$ with the metric induced from $\omega_j$. Then there is a homogeneous line bundle $(L_j,h_j)$ with a Hermitian metric $h_j$ such that its $p_j$-th tensor power gives $(\widehat{L}_j, \widehat{h}_j)$, where $p_j$ is the genus of  $X_j.$ (see \cite{EnZh} for more details).
	
	Let $\pi_j$ be the projection from $X$ onto the $j$-th factor $X_j$ for $1\leq j\leq t$. Define the line bundle $L$ over $X$ with a Hermitian metric $h$ to be:
	\begin{equation*}
		(L,h):=\pi_1^*(L_1, h_1)\otimes \cdots \otimes \pi_t^*(L_t, h_t).
	\end{equation*}
	Let $(L^*, h^*)$ be the dual line bundle of $(L,h)$. Write $D(L^*)$ and $S(L^*)$ for the associated unit disc and unit circle bundle. The specific example above is the special case $t=2$ and $X_1=X_2=\mathbb{CP}^1$. Proceeding as in that example, one finds that there is a canonical way to map $L^*$ to $\mathbb{C}^N$, for some $N$, induced by the minimal embedding of $X$ into some complex projective space (see \cite{FHX}). If we denote this map $L^*\to \mathbb{C}^N$ by $F$ (in the example above, the map $F$ is as given by \eqref{eqnltom}), then $F$ sends the zero section of $L^*$ to the point $0$ and is a holomorphic embedding away from the zero section. It follows that the image of $D(L^*)$ under the map $F$ is a domain $G$ with a singular point at $0$. The boundary of $G$ is given by the image of $S(L^*)$. It is not spherical since $S(L^*)$ is not by \cite{Wang}. Moreover, as the Bergman kernel form of $D(L^*)$ is algebraic by \cite{EnZh}, the Bergman kernel form of $G$ is also algebraic by Theorem \ref{Kobayashi thm}.	
\end{rmk}

\section{Appendix}\label{Sec Appendix}
In this section, we will prove the claim that the Bergman kernel of the domain $\Omega$ in $\mathbb{C}^3$ as defined in \eqref{domain Omega} is rational.
This fact actually follows from a general theorem in \cite{EnZh} (see Theorem 3.3 in \cite{EnZh} and its proof). We include here a proof in this particular example for the convenience of readers and self-containedness of this paper. In fact, we shall compute the Bergman kernel of $\Omega$ explicitly (Theorem \ref{thm:OmegaBergman} below).

Recall that
\begin{equation}\label{eq:Omega}
		\Omega:=\bigl\{ (z,\lambda)=(z_1,z_2,\lambda)\in \mathbb{C}^3: |\lambda|^2(1+|z_1|^2)(1+|z_2|^2)<1 \bigr\}.
\end{equation}
We let
\begin{equation*}
	h(z):=(1+|z_1|^2)(1+|z_2|^2),
\end{equation*}
and denote the defining function by
\begin{equation*}
	\rho(z,\lambda):=|\lambda|^2(1+|z_1|^2)(1+|z_2|^2)-1.
\end{equation*}

We recall that the Bergman space on $\Omega$ is defined as
\begin{equation}
A^2(\Omega):=\bigl\{f(z,\lambda) \mbox{ is holomorphic in } \Omega: i\int_{\Omega} |f(z,\lambda)|^2 dz\wedge d\lambda\wedge d\oo{z} \wedge d\oo{\lambda}<\infty \bigr\},
\end{equation}
and let
\begin{equation}
A^2_m(\Omega):=\bigl\{ f(z) \text{ {\rm is holomrphic in $\bC^2$}}\colon \lambda^m f(z)\in A^2(\Omega)  \bigr\}.
\end{equation}
Note that the $L^2$ norm of $\lambda^mf(z)$ is given by
\begin{align*}
\|\lambda^m f(z)\|^2=&i\int_{\Omega} |\lambda|^{2m}|f(z)|^2 dz\wedge d\lambda \wedge d\oo{z}\wedge d\oo{\lambda}
\\
=&\int_{z\in \mathbb{C}^2}\Bigl(\int_{|\lambda|^2<h(z)^{-1}} |\lambda|^{2m} i\,d\lambda \wedge d\oo{\lambda} \Bigr) |f(z)|^2dz\wedge d\oo{z}
\\
=&\int_{z\in \mathbb{C}^2}\Bigl(\int_{|\lambda|^2<h(z)^{-1}} |\lambda|^{2m} i\,d\lambda \wedge d\oo{\lambda} \Bigr) |f(z)|^2dz\wedge d\oo{z}.
\end{align*}
We can rewrite the inner integral as follows:
\begin{align*}
\int_{|\lambda|^2<h(z)^{-1}} |\lambda|^{2m} i\,d\lambda \wedge d\oo{\lambda}
=\int_0^{2\pi}\int_0^{\frac{1}{\sqrt{h(z)}}} r^{2m}2r drd\theta
=2\pi\int_0^{\frac{1}{h(z)}} r^{m} dr
=\frac{2\pi}{m+1} h(z)^{-(m+1)}.
\end{align*}
Thus,
\begin{equation}\label{norm}
\|\lambda^m f(z)\|^2=\frac{2\pi}{m+1} \int_{\mathbb{C}^2} |f(z)|^2 h(z)^{-(m+1)} dz\wedge d\oo{z}.
\end{equation}
If we introduce the weighted Bergman space
\begin{equation*}
A^2(\mathbb{C}^2, h^{-(m+1)})=\bigl\{ f(z) \mbox{ is holomorphic in }\mathbb{C}^2: \int_{\mathbb{C}^2} |f(z)|^2 h(z)^{-(m+1)} dz\wedge d\oo{z}<\infty \bigr\},
\end{equation*}
then
\begin{equation}\label{weighted L2 space}
A^2_m(\Omega)=\bigl\{\lambda^m f(z): f(z) \in A^2(\mathbb{C}^2, h^{-(m+1)}) \bigr\}.
\end{equation}
We note that
$A_{m_1}^2(\Omega)$ and $A_{m_2}^2(\Omega)$ are orthogonal to each other if $m_1\neq m_2$. We can therefore orthogonally decompose $A^2(\Omega)$ into a direct sum as follows.
\begin{lemma}\label{orthogonal decomposition}
	\begin{equation*}
	A^2(\Omega)=\bigoplus_{m=0}^{\infty} A_m^2(\Omega).
	\end{equation*}
\end{lemma}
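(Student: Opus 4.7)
The plan is to exploit the natural $S^1$-action on $\Omega$ given by $(z,\lambda)\mapsto(z,e^{i\theta}\lambda)$, under which $\Omega$ is invariant (since $|e^{i\theta}\lambda|^2=|\lambda|^2$) and which induces a unitary representation on $A^2(\Omega)$ whose weight-$m$ isotypical component will turn out to be exactly $A_m^2(\Omega)$.

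First I would verify mutual orthogonality of $A_{m_1}^2(\Omega)$ and $A_{m_2}^2(\Omega)$ for $m_1\neq m_2$. Given $\lambda^{m_1}f(z)\in A_{m_1}^2(\Omega)$ and $\lambda^{m_2}g(z)\in A_{m_2}^2(\Omega)$, I would compute their inner product using Fubini by integrating first over the $\lambda$-fiber $\{|\lambda|<1/\sqrt{h(z)}\}$ and then over $z\in\mathbb{C}^2$. In polar coordinates $\lambda=re^{i\theta}$, the innermost integral contributes an angular factor $\int_0^{2\pi}e^{i(m_1-m_2)\theta}\,d\theta=0$, giving the desired orthogonality.

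Next, given any $F\in A^2(\Omega)$, I would expand it fiberwise. For each fixed $z\in\mathbb{C}^2$, the $\lambda$-slice of $\Omega$ is the disk $|\lambda|<1/\sqrt{h(z)}$ centered at the origin, so $F(z,\cdot)$ has a convergent Taylor expansion $F(z,\lambda)=\sum_{m=0}^{\infty}f_m(z)\lambda^m$. The Cauchy coefficient formula
\[
f_m(z)=\frac{1}{2\pi i}\oint_{|\lambda|=r}\frac{F(z,\lambda)}{\lambda^{m+1}}\,d\lambda,
\]
valid for any admissible $r$, exhibits $f_m$ as an integral of holomorphic data in $z$ and therefore shows $f_m$ is holomorphic on all of $\mathbb{C}^2$.

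The final and most delicate step is to show simultaneously that $\lambda^m f_m\in A_m^2(\Omega)$ for every $m$ and that the partial sums $F_N:=\sum_{m=0}^{N}\lambda^m f_m$ converge to $F$ in $L^2(\Omega)$. For fixed $z$, Parseval's identity on the circle $|\lambda|=r<1/\sqrt{h(z)}$ yields $\tfrac{1}{2\pi}\int_0^{2\pi}|F(z,re^{i\theta})|^2\,d\theta=\sum_{m=0}^{\infty}|f_m(z)|^2 r^{2m}$. Multiplying by $r$, integrating in $r$ over $(0,1/\sqrt{h(z)})$, and integrating in $z$ over $\mathbb{C}^2$, I would invoke Tonelli (everything being non-negative) to swap sum and integral, obtaining the global Parseval identity $\|F\|_{L^2(\Omega)}^2=\sum_{m=0}^{\infty}\|\lambda^m f_m\|^2$. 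Since the left-hand side is finite by assumption, each summand is finite (so $\lambda^m f_m\in A_m^2(\Omega)$) and the orthogonal series $\sum_m \lambda^m f_m$ converges to $F$ in $L^2$-norm, which together with the orthogonality from Step 1 yields the claimed direct-sum decomposition. The main obstacle I anticipate is precisely this Fubini-Tonelli/Parseval manipulation, but since all integrands are non-negative and the total mass is finite by hypothesis, it should go through cleanly.
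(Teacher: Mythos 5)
Your proof is correct and follows essentially the same route as the paper: fiberwise Taylor expansion in $\lambda$, term-by-term integration justified by Tonelli/monotone convergence, and identification of each coefficient with an element of the weighted space so that $\lambda^m f_m\in A^2_m(\Omega)$. The only (minor) difference is the concluding step — you get $L^2$-convergence of the partial sums from the Parseval identity (strictly speaking one should apply it to the tails $F-F_N$, or use that $L^2$-convergence of holomorphic functions implies local uniform convergence, to see the limit is $F$ itself), whereas the paper instead shows that any $F$ orthogonal to every $A^2_m(\Omega)$ has all coefficients vanishing; both completions are routine.
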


\begin{proof}
	Let $f(z,\lambda)\in A^2(\Omega)$. If we fix $z\in \mathbb{C}^2$, then $\lambda$ is contained in the disc $\{\lambda\in \mathbb{C}, |\lambda|^2<h(z)^{-1}\}$. By taking the Taylor expansion at $\lambda=0$, we obtain
	\begin{equation*}
	f(z,\lambda)=\sum_{j=0}^{\infty} a_j(z)\lambda^j, \quad \mbox{ for } |\lambda|^2< h(z)^{-1},
	\end{equation*}
	where each $a_j(z)$ is holomorphic on $\mathbb{C}^2$. We shall first write $\|f(z,\lambda)\|^2$ in terms of $\{a_j(z)\}_{j=0}^{\infty}$. We have
	\begin{align*}
	\|f(z,\lambda)\|^2=&\int_{z\in \mathbb{C}^2}\left(\int_{|\lambda|^2<h^{-1}(z)} |f(z,\lambda)|^2 i\,d\lambda\wedge d\oo{\lambda} \right) dz\wedge d\oo{z}.
	\end{align*}
	The inner integral can be computed as
	\begin{align*}
	\int_{|\lambda|^2<h^{-1}(z)} |f(z,\lambda)|^2 i\,d\lambda \wedge d\oo{\lambda}
	=&\lim_{\eps\rightarrow 0^+}\sum_{s=0}^{\infty}\sum_{t=0}^{\infty}a_s(z)\oo{a_t(z)}\int_{|\lambda|^2<h^{-1}(z)-\eps} \lambda^s\oo{\lambda^t} i\,d\lambda \wedge d\oo{\lambda}
	\\
	=&\lim_{\eps\rightarrow 0^+}\sum_{j=0}^{\infty}\frac{2\pi }{j+1}|a_j(z)|^2\bigl(h(z)^{-1}-\eps\bigr)^{j+1}
	\\
	=&\sum_{j=0}^{\infty}\frac{2\pi }{j+1}|a_j(z)|^2 h(z)^{-(j+1)},
	\end{align*}
	where the last equality follows from the Monotone Convergence theorem. Therefore,
	\begin{align*}
	\|f(z,\lambda)\|^2=&\sum_{j=0}^{\infty}\frac{2\pi }{j+1}\int_{z\in \mathbb{C}^2}|a_j(z)|^2 h(z)^{-(j+1)} dz\wedge d\oo{z},
	\end{align*}
	which immediately implies that each $a_j(z)$ is contained in $A^2(\mathbb{C}^2, h^{-(j+1)})$.
	
	Suppose $f(z,\lambda)\perp A^2_m(\Omega)$. Then for any $\lambda^m g(z)\in A^2_m(\Omega)$,
	\begin{align*}
	0=&i\int_{\Omega} f(z,\lambda)\oo{\lambda^m g(z)} dz\wedge d\lambda \wedge d\oo{z}\wedge d\oo{\lambda}
	\\
	=&\int_{z\in \mathbb{C}^2}\left(\int_{|\lambda|^2<h(z)^{-1}} f(z,\lambda)\oo{\lambda^m} \,i\,d\lambda \wedge d\oo{\lambda} \right) \oo{g(z)}dz\wedge d\oo{z}.
	\end{align*}
	The inner integral can be computed as follows
	\begin{align*}
	\int_{|\lambda|^2<h^{-1}(z)} f(z,\lambda)\oo{\lambda^m} i\,d\lambda \wedge d\oo{\lambda}
	=&\lim_{\eps\rightarrow 0^+}\sum_{j=0}^{\infty}a_j(z)\int_{|\lambda|^2<h(z)^{-1}-\eps} \lambda^j\oo{\lambda^m} i\,d\lambda \wedge d\oo{\lambda}
	\\
	=&\lim_{\eps\rightarrow 0^+}\frac{2\pi }{m+1}a_m(z)\bigl(h(z)^{-1}-\eps\bigr)^{m+1}
	\\
	=&\frac{2\pi}{m+1} a_m(z) h(z)^{-(m+1)}.
	\end{align*}
	Therefore,
	\begin{align*}
	0=&\frac{2\pi}{m+1}\int_{z\in \mathbb{C}^2} a_m(z)  \oo{g(z)}h(z)^{-(m+1)}dz\wedge d\oo{z}, \quad \mbox{ for any } g\in A^2(\mathbb{C}^2, h^{-(m+1)}).
	\end{align*}
	Since $a_m$ belongs to the space $A^2(\mathbb{C}^2, h^{-(m+1)})$, we get $a_m=0$. Therefore, the direct sum of $A_m^2(\Omega)$ for $0\leq m<\infty$ generates $A^2(\Omega)$.
\end{proof}

Since $A^2_m(\Omega)$ can be identified with the weighted Bergman space $A^2(\mathbb{C}^2, h^{-(m+1)})$ as in \eqref{weighted L2 space}, we can find an explicit orthonormal basis and compute its reproducing kernel.

\begin{prop}\label{K_m^* CPn}
	Let $m \geq 1.$ The reproducing kernel of $A^2_m(\Omega)$ is
	\begin{equation}
	K_m^*(z,\lambda,\oo{w},\oo{\tau})
	=\frac{(m+1)m^2}{(2\pi)^3}\lambda^m\oo{\tau}^m(1+z_1\oo{w_1})^{m-1}(1+z_2\oo{w_2})^{m-1},
	\end{equation}
	where $(z,\lambda), (w,\tau)$ are points in $\Omega$.
\end{prop}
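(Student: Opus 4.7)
The plan is to exploit the fact that the weighted Bergman space $A^2(\mathbb{C}^2, h^{-(m+1)})$---to which $A^2_m(\Omega)$ is isometric, up to the scalar $\tfrac{2\pi}{m+1}$ appearing in \eqref{norm}---has a product structure. Since the weight factors as $h(z)^{-(m+1)} = (1+|z_1|^2)^{-(m+1)}(1+|z_2|^2)^{-(m+1)}$ and Lebesgue measure on $\mathbb{C}^2$ is the product of planar measures, this Hilbert space decomposes as a tensor product of two identical one-variable weighted Bergman spaces on $\mathbb{C}$. Consequently the problem reduces to computing the one-variable reproducing kernel and then forming the product; the $\lambda^m\oo{\tau}^m$ prefactor will come along automatically from the identification $\lambda^m f(z)\leftrightarrow f(z)$.

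For the one-variable piece, I would first observe that rotational invariance in $z$ makes distinct monomials $\{z^k\}$ mutually orthogonal. Passing to polar coordinates yields a Beta integral,
\[
\int_0^\infty r^{2k+1}(1+r^2)^{-(m+1)}\, dr \;=\; \tfrac{1}{2}B(k+1,m-k)\;=\;\frac{k!\,(m-k-1)!}{2\,m!},
\]
which is finite exactly when $0\le k\le m-1$. Hence the one-variable space is $m$-dimensional, and after normalization each basis vector carries a squared coefficient proportional to $\binom{m-1}{k}$. A single application of the binomial theorem, $\sum_{k=0}^{m-1}\binom{m-1}{k}(z\oo{w})^k=(1+z\oo{w})^{m-1}$, then collapses the sum and produces a one-variable reproducing kernel of the form $\tfrac{m}{2\pi}(1+z\oo{w})^{m-1}$.

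To conclude, I would multiply the two one-variable kernels to obtain the reproducing kernel of $A^2(\mathbb{C}^2,h^{-(m+1)})$, attach the $\lambda^m\oo{\tau}^m$ factor, and combine all numerical constants---two copies of $m/(2\pi)$ from the one-variable kernels together with the factor $\tfrac{m+1}{2\pi}$ coming from \eqref{norm}---to recover the coefficient $(m+1)m^2/(2\pi)^3$ in the statement. I do not anticipate any serious obstacle; the only real subtlety is careful bookkeeping of the normalization constants arising from the $i^{n^2}$ convention in \eqref{inner product}, the conversion between $dz\wedge d\oo{z}$ and ordinary Lebesgue measure, and the prefactor $\tfrac{2\pi}{m+1}$ in \eqref{norm}.
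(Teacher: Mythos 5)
Your proposal is correct and follows essentially the same route as the paper: the paper likewise uses the orthogonal monomial basis, evaluates the same Beta-type integral (in the form $\int_0^\infty r^p(1+r)^{-q}\,dr=\frac{(q-p-2)!\,p!}{(q-1)!}$), and collapses the sum by the binomial theorem, so your tensor-product packaging of two one-variable kernels is only a reorganization of the identical computation with double-indexed monomials $\lambda^m z^\alpha$. The bookkeeping also checks out: $\frac{m+1}{2\pi}\cdot\bigl(\frac{m}{2\pi}\bigr)^2=\frac{(m+1)m^2}{(2\pi)^3}$, in agreement with the stated kernel.
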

\begin{proof}
	Denote
	\begin{equation*}
	z^{\alpha}=z_1^{\alpha_1}z_2^{\alpha_2}, \quad \mbox{ for any multi-index } \alpha=(\alpha_1,\alpha_2)\in \mathbb{Z}_{\geq 0}^2.
	\end{equation*}
	By \eqref{weighted L2 space}, since $\Omega$ is Reinhardt, it is easy to see that
	\begin{equation*}
	\bigl\{ \lambda^m z^{\alpha}: z^{\alpha} \in A^2(\mathbb{C}^2, h^{-(m+1)}) \bigr\}
	\end{equation*}
	forms an orthogonal basis of $A^2_m(\Omega)$. We shall compute the norm for each $\lambda^m z^{\alpha}$. Using \eqref{norm}, we have
	\begin{align*}
	\|\lambda^m z^{\alpha}\|^2
	=&\frac{2\pi}{m+1} \int_{\mathbb{C}^2} |z^{\alpha}|^2 (1+|z_1|^2)^{-(m+1)}(1+|z_2|^2)^{-(m+1)} dz\wedge d\oo{z}
	\\
	=&\frac{2\pi}{m+1} \int_{\mathbb{C}} |z_1|^{2\alpha_1} (1+|z_1|^2)^{-(m+1)}\,i\, dz_1\wedge d\oo{z_1} \cdot\int_{\mathbb{C}} |z_2|^{2\alpha_2} (1+|z_2|^2)^{-(m+1)}\,i\, dz_2\wedge d\oo{z_2}
	\\
	=&\frac{(2\pi)^3}{m+1} \int_0^{\infty} r_1^{\alpha_1} (1+r_1)^{-(m+1)} dr_1 \cdot\int_0^{\infty} r_2^{\alpha_2} (1+r_2)^{-(m+1)} dr_2.
	\end{align*}
	By the elementary integral identity
	\begin{equation}\label{element integral}
	\int_0^{\infty} r^p \frac{1}{(1+r)^q}dr=\frac{(q-p-2)!p!}{(q-1)!}, \quad \mbox{ for any nonnegative integers $p, q$ with } q\geq p+2,
	\end{equation}
	we get
	\begin{equation*}
		\|\lambda^m z^{\alpha}\|^2=\begin{dcases}
		\frac{(2\pi)^3}{m+1}\frac{(m-\alpha_1-1)!(m-\alpha_2-1)!\alpha!}{m!^2} & \mbox{if }\alpha_1, \alpha_2\leq m-1,
		\\
		+\infty & \mbox{otherwise}.
		\end{dcases}
	\end{equation*}
Thus,  $\bigl\{ \frac{\lambda^m z^{\alpha}}{\|\lambda^mz^{\alpha}\|}: 0\leq \alpha_1, \alpha_2\leq m-1 \bigr\}$ is an orthonormal basis of $A^2_m(\Omega)$,  and the reproducing kernel of $A^2_m(\Omega)$ is given by
	\begin{align*}
	K_m^*(z,\lambda,\oo{w},\oo{\tau})
	&=\sum_{0\leq \alpha_1, \alpha_2\leq m-1} \frac{z^{\alpha}\lambda^m\oo{w}^{\alpha}\oo{\tau}^m}{\|z^{\alpha}\lambda^m\|^2}\\
	&=\frac{(m+1)m^2}{(2\pi)^3}\lambda^m\oo{\tau}^m\sum_{\alpha_1=0}^{m-1}\binom{m-1}{\alpha_1}z_1^{\alpha_1}\oo{w_1}^{\alpha_1} \sum_{\alpha_2=0}^{m-1}\binom{m-1}{\alpha_2}z_2^{\alpha_2}\oo{w_2}^{\alpha_2}
	\\
	&=\frac{(m+1)m^2}{(2\pi)^3}\lambda^m\oo{\tau}^m(1+z_1\oo{w_1})^{m-1}(1+z_2\oo{w_2})^{m-1}.
	\end{align*}
\end{proof}

Now we are ready to compute the Bergman kernel form of $\Omega$.
\begin{thm}\label{thm:OmegaBergman}
	The Bergman kernel form of the domain $\Omega\subset\bC^3$ in $\eqref{eq:Omega}$ is given by
	\begin{equation*}
	K_\Omega(z,\lambda,\oo{w},\oo{\tau})=i K^*(z,\lambda,\oo{w},\oo{\tau}) dz\wedge d\lambda\wedge d\oo{w}\wedge d\oo{\tau},
	\end{equation*}
	where
	\begin{equation*}
	K^*(z,\lambda,\oo{w},\oo{\tau})=\sum_{m=1}^{\infty}\frac{(m+1)m^2}{(2\pi)^3}\lambda^m\oo{\tau}^m(1+z_1\oo{w_1})^{m-1}(1+z_2\oo{w_2})^{m-1}.
	\end{equation*}
	It can be written in terms of the complexified defining function $$
\rho(z,\lambda,\oo{w},\oo{\tau})=\lambda\oo{\tau}(1+z_1\oo{w_1})(1+z_2\oo{w_2})-1
$$
as
	\begin{equation*}
	K^*(z,\lambda,\oo{w},\oo{\tau})=\frac{1}{(2\pi)^3}\Bigl(\frac{4\lambda\oo{\tau}}{\rho(z,\lambda,\oo{w},\oo{\tau})^{3}}+\frac{6\lambda\oo{\tau}}{\rho(z,\lambda,\oo{w},\oo{\tau})^{4}}\Bigr).
	\end{equation*}
\end{thm}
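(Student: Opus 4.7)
The plan is to combine the orthogonal Hilbert-space decomposition in Lemma \ref{orthogonal decomposition} with the reproducing kernels of the pieces from Proposition \ref{K_m^* CPn}, and then sum the resulting power series in closed form. Since the reproducing kernel of an orthogonal direct sum of reproducing kernel Hilbert spaces is the sum of the individual kernels, I would start from
$$K^*(z,\lambda,\bar w,\bar\tau) = \sum_{m=0}^\infty K_m^*(z,\lambda,\bar w,\bar\tau).$$
Before invoking Proposition \ref{K_m^* CPn} (which is stated only for $m\ge 1$), I would first check that $A_0^2(\Omega)=\{0\}$: via \eqref{weighted L2 space} an element corresponds to an entire $f(z)$ with $\int_{\mathbb{C}^2}|f|^2 h^{-1}\,dz\wedge d\bar z<\infty$, and the integrability criterion \eqref{element integral} fails for every monomial $z^\alpha$ since $1<\alpha_j+2$ never holds for $\alpha_j\ge 0$. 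Thus the sum effectively starts at $m=1$.

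Next, I would substitute the explicit formula from Proposition \ref{K_m^* CPn}, factor out the common $\lambda\bar\tau/(2\pi)^3$, and introduce
$$u := \lambda\bar\tau(1+z_1\bar w_1)(1+z_2\bar w_2),$$
which reduces everything to
$$K^* = \frac{\lambda\bar\tau}{(2\pi)^3}\sum_{m=1}^\infty (m+1)m^2\, u^{m-1}.$$
A Cauchy--Schwarz estimate of the form $|1+z_j\bar w_j|\le (1+|z_j|^2)^{1/2}(1+|w_j|^2)^{1/2}$, combined with the defining inequality of $\Omega$, shows $|u|<1$ on $\Omega\times\Omega$, so the series converges absolutely and may be summed termwise.

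To get the closed form I would use the standard generating-function identities obtained by successively differentiating $(1-u)^{-1}$, namely $\sum m\, u^{m-1}=(1-u)^{-2}$, $\sum m(m-1)u^{m-2}=2(1-u)^{-3}$, and $\sum m(m-1)(m-2)u^{m-3}=6(1-u)^{-4}$. Expanding $(m+1)m^2=m^3+m^2$ as a combination of the falling factorials and collecting everything over $(1-u)^4$, a direct computation yields
$$\sum_{m=1}^\infty (m+1)m^2\, u^{m-1}=\frac{4u+2}{(1-u)^4}.$$
Setting $\rho=u-1$ (the complexified defining function) so that $(1-u)^4=\rho^4$ and $4u+2=4\rho+6$, this rewrites as $4/\rho^3+6/\rho^4$, which after restoring the prefactor gives exactly the claimed closed form.

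Finally, the conversion from the scalar function $K^*$ to the Bergman kernel form $K_\Omega = iK^*\,dz\wedge d\lambda\wedge d\bar w\wedge d\bar\tau$ is immediate from the definition in \S 2.2 with $n=3$: every $\varphi\in A^2(\Omega)$ has the form $f(z,\lambda)\,dz\wedge d\lambda$, and $i^{n^2}=i^9=i$. The argument is essentially bookkeeping once Lemma \ref{orthogonal decomposition} and Proposition \ref{K_m^* CPn} are in hand, so I do not expect any conceptual obstacle; the only step requiring genuine care is the generating-function calculation (and the attendant absolute convergence of the series on compact subsets of $\Omega\times\Omega$) needed to collapse the infinite sum to the rational expression in $\rho$.
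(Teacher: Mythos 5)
Your proposal is correct and follows essentially the same route as the paper: sum the reproducing kernels $K_m^*$ over the orthogonal decomposition of Lemma \ref{orthogonal decomposition} (noting $A_0^2(\Omega)=\{0\}$) and collapse the resulting power series in $u=\lambda\oo{\tau}(1+z_1\oo{w_1})(1+z_2\oo{w_2})$ into the rational expression in $\rho=u-1$. The only difference is cosmetic: you sum the series via falling factorials and derivatives of $(1-u)^{-1}$, while the paper expands $(m+2)(m+1)^2$ in the basis $\binom{m+j}{j}$ and uses $\sum_m \binom{m+j}{j}u^m=(1-u)^{-(j+1)}$; both yield $\frac{4u+2}{(1-u)^4}$ and hence the stated formula.
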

\begin{proof}
	By Lemma \ref{orthogonal decomposition}, we immediately get the reproducing kernel of $A^2(\Omega)$ by adding up the reproducing kernels of $A^2_m(\Omega)$ for all $m$. Since $A^2_0(\Omega)=\{0\}$, we obtain
	\begin{align*}
	K^*(z,\lambda,\oo{w},\oo{\tau})
	=&\sum_{m=1}^{\infty}K_m^*(z,\lambda,\oo{w},\oo{\tau})\\
	=&\sum_{m=0}^{\infty}\frac{(m+2)(m+1)^2}{(2\pi)^3}\lambda^{m+1}\oo{\tau}^{m+1}(1+z_1\oo{w_1})^{m}(1+z_2\oo{w_2})^{m}.
	\end{align*}
It remains to write $K^*(z,\lambda,\oo{w},\oo{\tau})$ in terms of the defining function $\rho(z,\lambda,\oo{w},\oo{\tau})$. We use the Taylor expansion of $1/(1-x)^{j+1}$ for $0\leq j\leq 3$ to obtain
	\begin{equation*}
	\frac{1}{(-\rho(z,\lambda,\oo{w},\oo{\tau}))^{j+1}}=\frac{1}{(1-(1+z_1 \oo{w_1})(1+z_2 \oo{w_2})\lambda\oo{\tau})^{j+1}}=\sum_{m=0}^{\infty}\binom{m+j}{j}(1+z_1 \oo{w_1})^m(1+z_2 \oo{w_2})^m\lambda^m\oo{\tau}^m.
	\end{equation*}
	Note that $(m+2)(m+1)^2$ is a polynomial in $m$ of degree $3$. Since $\{\binom{m+j}{j}\}_{j=0}^{3}$ is a basis of polynomials in $m$ with degree $\leq 3$, we can write
	\begin{equation*}
	(m+2)(m+1)^2=\sum_{j=0}^{3}a_j\binom{m+j}{j}.
	\end{equation*}
One can check that the coefficients are given by $a_0=a_1=0$, $a_2=-4$ and $a_3=6$. Therefore,
	\begin{align*}
	K^*(z,\lambda,\oo{w},\oo{\tau})
	=&\frac{1}{(2\pi)^3}\sum_{m=0}^{\infty}\sum_{j=0}^{3}a_j\binom{m+j}{j}\lambda^{m+1}\oo{\tau}^{m+1}(1+z_1\oo{w_1})^{m}(1+z_2\oo{w_2})^{m}
	\\
	=&\frac{1}{(2\pi)^3}\sum_{j=0}^{3}a_j\frac{\lambda\oo{\tau}}{(-\rho(z,\lambda,\oo{w},\oo{\tau}))^{j+1}},
	\end{align*}
and the result follows.
\end{proof}

\bibliographystyle{plain}
\bibliography{references}

\end{document}